\newtheorem{defi}{Definition}[section]
\newtheorem{sinobservacion}[defi]{}
\newenvironment{sinob}{\begin{sinobservacion} \rm}{\end{sinobservacion} }
\newtheorem{coro}[defi]{Corollary}
\newtheorem{lema}[defi]{Lemma}
\newtheorem{nota}[defi]{Notation}
\newtheorem{obs}[defi]{Remark}
\newtheorem{prop}[defi]{Proposition}
\newtheorem{teo}[defi]{Theorem}
\newtheorem{ej}[defi]{Example}
\newcommand{\benu}{\begin{enumerate}}
\newcommand{\enu}{\end{enumerate}}
\newcommand{\al}{\alpha}
\newcommand{\mbmA}{\mbox{mod}\,A}
\newcommand{\fle}{\rightarrow}
\definecolor{darkgreen}{RGB}{55,138,0}
  \definecolor{burntorange}{RGB}{180,85,0}
\definecolor{navyblue}{RGB}{18,40,180}
\definecolor{cyan(process)}{rgb}{0.0, 0.6, 1.0}
\definecolor{blue-violet}{rgb}{0.54, 0.17, 0.89}
\begin{document}
\title[A generalization of the nilpotency index]
{A generalization of the nilpotency index of the radical of the module category of an algebra}
\author[Chaio]{Claudia Chaio}
\address{Centro marplatense de Investigaciones Matem\'aticas, Facultad de Ciencias Exactas y
Naturales, Funes 3350, Universidad Nacional de Mar del Plata, 7600 Mar del
Plata, CONICET Argentina}
\email{cchaio@mdp.edu.ar}

\author[Suarez]{Pamela Suarez}
\address{Centro marplatense de Investigaciones Matem\'aticas, Facultad de Ciencias Exactas y
Naturales, Funes 3350, Universidad Nacional de Mar del Plata, 7600 Mar del
Plata, Argentina}
\email{psuarez@mdp.edu.ar}

\keywords{Nilpotency Index, Radical,  Monomial Algebras, Toupie Algebras.}
\subjclass[2020]{16G70, 16G20}
\maketitle

\begin{abstract}
Let $A$ be a finite dimensional representation-finite algebra over
an algebraically closed field. The aim of this  work is to generalize the results proven in \cite{CGS}. Precisely, we determine which vertices of $Q_A$ are sufficient  to be considered in order to compute the nilpotency index of the radical of the module category of a monomial algebra and a toupie algebra $A$, when the Auslander-Reiten quiver is not necessarily a component with length.
\end{abstract}

\section*{Introduction}
Let $A$ be a finite dimensional $k$-algebra over an algebraically closed field $k$, and  $\mbox{mod}\,A$ the category of finitely generated right $A$-modules.

%For $X,Y$ indecomposable $A$-modules,
%we denote by $\Re(X,Y)$ {\color{blue} the ideal of $\mbox{mod}\,A$ that consists of} all non-isomorphisms from $X$  to $Y$. Inductively, the powers of $\Re(X,Y)$ are defined and  $\Re^{\infty}(X,Y)= \cap_{n\in \mathbb{N}} \Re^{n}(X,Y)$.

  For an indecomposable $A$-module $X$ we know that the radical of $\mbox{End}_A\,X$, denoted by $\Re(\mbox{End}_A\,X)$,
 is the set of all non-isomorphisms from $X$  to $X$. This concept was extended  to  any  $A$-modules $X$ and $Y$ not necesarily indecomposable, defining the ideal  $\Re(\mbox{Hom}_A(X,Y))$ of $\mbox{mod}\,A$ as the set of all morphisms $f:X \rightarrow Y$ such that for every indecomposable $A$-module $M$ and every morphism $h: M \rightarrow X$ and $h': Y\rightarrow M$ we have that $h'fh \in \Re(\mbox{End}_A\,M)$. Inductively, the powers of $\Re(\mbox{Hom}_A(X,Y))$ are defined and  $\Re^{\infty}(\mbox{Hom}_A(X,Y))= \cap_{n\in \mathbb{N}} \Re^{n}(\mbox{Hom}_A(X,Y))$.

 In case that we deal with a representation-finite algebra, it is well-known that  $\Re^{\infty}(\mbox{mod}\,A)$ vanishes, that is,  $\Re^{\infty}(\mbox{Hom}_A(X,Y))=0$ for all $A$-modules $X$ and $Y$. Therefore there is a positive integer $n$ such that $\Re^{n}(\mbox{mod}\,A)=0$, see \cite[p. 183]{ARS}.
The minimal lower bound $m \geq 1$ such that $\Re^{m}(\mbox{mod}\,A)$ vanishes
is called the nilpotency index of $\Re(\mbmA)$.

%In \cite[Proposition p. 56]{S},  J. Schroer  proved that the nilpotency index of the radical of $\mbox{mod}\, A$ for $A$ a representation-finite algebra, is a bound that coincides with the length of the longest non-zero path from the projective corresponding to  a vertex $a$ of $Q_A$ to the injective corresponding to the same vertex  going through the simple in $a$.

It is  known that  the nilpotency index of the radical of $\mbox{mod}\, A$ for $A$ a representation-finite algebra, is a bound that coincides with the length of the longest non-zero path from the projective corresponding to  a vertex $a$ of $Q_A$ to the injective corresponding to the same vertex  going through the simple in $a$.

In \cite{C},  the first named author studied the problem to determine the nilpotency index of the radical of $\mbox{mod}\, A$ for $A$ a representation-finite algebra in terms of the left and right degrees of some particular irreducible morphisms. Precisely, for $A \simeq kQ_A/I_A$ it is enough to study the left degree of the irreducible morphisms from $I_a$ to $I_a/S_a$ and the right degree of the irreducible morphisms from the radical of $P_a$ to $P_a$, for  all the vertices $a$ in $Q_A$ where $P_a$, $I_a$ and $S_a$ are the projective, the injective and the simple, respectively,  corresponding to a vertex $a$ in $Q_A$. The concept of degree of an irreducible morphism  allow us, in many cases,  to determine the nilpotency bound without constructing  the Auslander-Reiten quiver of  $\mbox{mod}\, A$. This  notion  was introduced in 1982 by S. Liu, see \cite{L}. Since 2004 there has been a development on the degree theory and  the concept of degree showed to be a powerful tool to solve many problems about the radical of a module category.

In 2013, S. Liu and the first named author studied the problem to determine the nilpotency index of $\Re(\mbox{mod}\, A)$ for any representation-finite artin algebra $A$, see  \cite{CL}.

Continuing with this kind of problems, in \cite{CG2}, C. Chaio and V. Guazzelli found the first result concerning how to reduce the steps in order to compute the nilpotency index of the radical of a module category of an algebra of representation-finite type. Although to compute it requires to analyze a finite number of particular paths, it is of interest  to reduce the process. Precisely, in such a work it was proved that if $A$ is a finite dimensional algebra over an algebraically closed field, then  it is not necessary to analyze the length of the paths from the projective to the injective  going through the simple corresponding to a vertex $a$  such that $a$ is a  sink or a source  of $Q_A$.

In \cite{CGS},  the two named authors and V. Guazzelli, continued analyzing how to reduce the steps to find the nilpotency bound, studying which vertices of $Q_A$ are sufficient to be consider in order to determine the nilpotency index of the radical of the module category of an algebra. The mentioned  authors studied the case of representation-finite monomial and toupie algebras where their Auslander-Reiten quivers are components with length. By a component with length we mean a component where  parallel paths have  the same length. Precisely, in case of representation-finite  monomial algebras they found that   we only have to study the length of the paths of irreducible morphisms between indecomposable modules from the projective
$P_u$ to the injective $I_u$, where $u$ belongs to the set of vertices involved in zero-relations.  By a vertex $u$ in $Q_A$  involved in  a zero-relation of an ideal $I_A$, we mean that if  $\alpha_m \dots \alpha_1 \in I_A$  then  $u =s(\alpha_i)$ for some $i=2, \dots, m$.

On the other hand, for toupie algebras  the authors studied the case where $I_A$ has only  commutative relations. They proved that  any vertex of $Q_A$ which is in the shortest  branch of $Q_A$, not being the sink or the source of $Q_A$,  is the one that  we may consider to determine the nilpotency index of the radical of $\mbox{mod}\,A$.

Moreover, the above mentioned authors also proved that for a representation-finite  string algebra it is sufficient to study the vertices $u \in Q_A$ that are involved in zero-relations of $I_A$ in order to determine the nilpotency index of the radical of $\mbox{mod}\,A$. They also found such a bound, by using results proven in \cite{CG} of  how to read the left and right degrees of irreducible morphisms between indecomposable $A$-modules in any string algebra.  We observe that representation-finite string algebras may have an Auslander-Reiten component without length.
\vspace{.05in}

The aim of this work is to generalize the results proven in \cite{CGS} to any representation-finite algebra. Precisely, we prove Theorem A.
\vspace{.1in}

{\bf Theorem A ([Corollary \ref{irred}])}. {\it
Let $A \simeq kQ_A/I_A$ be a finite dimensional $k$-algebra over an algebraically closed field.  Assume that $A$ is a representation-finite algebra. Let $a$ and $b \in (Q_A)_0$ such that there is an arrow from $a$ to $b$ in $Q_{A}$. Let  $r_a$ be the length of the non-zero path of irreducible morphisms from the projective $P_a$ to the injective $I_a$ going through the simple $S_a$. Then the following statements hold.
\begin{enumerate}
\item[(a)] If there is an irreducible morphism from $P_b$ to $P_a$ such that   $\emph{End} (P_b)\simeq k$ then $r_b \leq r_a$.
\item[(b)] If there is an irreducible morphism from $I_b$ to $I_a$  such that  $\emph{End} (I_a)\simeq k$ then $r_a \leq r_b$.
\end{enumerate}}
\vspace{.1in}

As an application of Theorem A, we obtain results concerning monomial and toupie algebras where their Auslander-Reiten quiver may be components without length.  More precisely, we prove Theorem B, Theorem C  and Theorem D.
\vspace{.1in}

{\bf Theorem B ([Theorem \ref{vertex-nilpo}])}. {\it
Let $A\simeq kQ_A/I_A$ be a representation-finite monomial algebra. Consider $(R_A)_0$ to be the set of  the vertices $u\in Q_A$
involved in zero-relations of $I_A$. Then the nilpotency index  $r_A$ of $\Re(\emph{mod}\,A)$ is determined by the length $r_u$ of the non-zero  path of irreducible morphisms from the projective $P_u$ to the injective $I_u$ going through the simple $S_u$, where $u \in(R_A)_0$. Precisely,  $r_A = \emph{max}\, \{ r_{a}+1 \}_{a \in (R_A)_0}$. }
\vspace{.1in}

{\bf Theorem C ([Theorem \ref{rarb}])}. {\it
Let $A\simeq kQ_A/I_A$ be a representation-finite monomial algebra. Consider $(R_A)_0$ to be the set of the vertices $u\in Q_A$
involved in zero-relations of $I_A$.  If each vertex in $(R_A)_0$ belongs to only one zero-relation only once then the nilpotency index of $\Re(\emph{mod}\,A)$ is determined by the length of the  non-zero  path of irreducible morphisms from the projective $P_u$ to the injective $I_u$, where $u \in(R_A)_0$. Furthermore, all the vertices $x$ related to a fixed zero-relation determine paths from $P_x$ to $I_x$ going through $S_x$ of the same length. Therefore,  it is enough to consider only one vertex in each zero-relation. }
\vspace{.1in}

{\bf Theorem D([Theorem \ref{toupie}])}. {\it Let $A \simeq kQ_A/I_A$ be a  representation-finite toupie algebra where $I_A$ has three branches where a branch has only one zero-relation and the others have  commutative relations. Then the nilpotency index of $\Re(\emph{mod}\,A)$ is determined by the length of the non-zero path of irreducible morphisms from the projective $P_u$ to the injective $I_u$ going through the simple $S_u$, where $u$ is a vertex involved in a zero-relation. More precisely, it is enough to study only one vertex in the zero-relation of $I_A$. }
\vspace{.1in}

We also show an example where Theorem B is not true when the  branch having the zero-relation has more than one zero-relation. In fact, in such a case it depends on the length of the branches.
\vspace{.05in}

This paper is organized as follows. In the first section, we recall some preliminaries results.
Section 2 is dedicated to prove a general result to reduce the number of paths that we have to analyze in order to determine the nilpotecy index of $\Re(\mbox{mod}\,A)$, for any finite-dimensional $k$-algebra of representation-finite type and we prove Theorem A.
In Section 3, we generalize the result proven in \cite{CGS} about the nilpotency index of the radical of a module category of a representation-finite monomial algebra where the Auslander-Reiten quiver is a component with length to any  representation-finite monomial algebra.  We prove Theorem B and Theorem C. In Section 4, we study the nilpotency index of the radical of a module category of a representation-finite having only one zero-relation. Note that in such a case the  Auslander-Reiten quiver is a component without length. Precisely, we prove Theorem D.
\vspace{.1in}

\thanks{The authors thankfully acknowledge partial support from Universidad Nacional de Mar del Plata, Argentina. Both authors want to thank Victoria Guazzelli for helpful conversations. The first author is a researcher from CONICET.}

\section{preliminaries}

Throughout this work, by an algebra we mean a finite dimensional $k$-algebra over an algebraically closed field,  $k$.

\begin{sinob} %{\bf Notations}
%\vspace{.1in}

A {\it quiver} $Q$ is given by a set of vertices $Q_0$ and
a set of arrows $Q_1$, together with two
maps $s,e:Q_1\fle Q_0$.  Given $\al\in Q_1$,  we denote by $s(\al)$ and 
 by $e(\al)$ 
the starting and
the ending vertex of the arrow $\al$, respectively.

%Given an arrow $\al\in Q_1$, we write $s(\al)$
%the starting vertex of $\al$ and $e(\al)$
%the ending vertex of $\al$. %By $\overline{Q}$ we denote the underlying graph of $Q$.

Let $A$ be an algebra.  Then by Gabriel's theorem we know that there exists a quiver $Q_A$, called the {\it ordinary quiver of}  $A$, such that
$A$ is the quotient of the path algebra $kQ_A$ by an admissible ideal.

We denote by $\mbox{mod}\,A$ the category of finitely generated
right $A$-modules and  by  $\mbox{ind}\,A$ the full subcategory of $\mbox{mod}\,A$ which consists of
one representative of each isomorphism class of indecomposable $A$-modules.

 We say that an algebra A is {\it representation-finite} if there is only a finite number
of isomorphisms classes of indecomposable A-modules.

\begin{lema} \label{multiplicity}
Let $A$ be a finite dimensional $k$-algebra. Consider $M$   an indecomposable  $A$-module, $S$  a simple $A$-module and $I$  the injective hull
of~$S$. Then $\dim_k (\emph{Hom}_{A}(M,I))=[M:S]$, the multiplicity of~$S$ in a composition
series for~$M$.
\end{lema}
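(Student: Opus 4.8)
The plan is to prove the equality by induction on the composition length of $M$, exploiting the injectivity of $I$ together with the fact that the injective hull of a simple module has simple socle.

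First I would observe that since $I$ is injective, the contravariant functor $\mathrm{Hom}_A(-,I)$ is exact. Consequently, for every short exact sequence $0\to M'\to M\to M''\to 0$ in $\mbox{mod}\,A$, the induced sequence $0\to \mathrm{Hom}_A(M'',I)\to \mathrm{Hom}_A(M,I)\to \mathrm{Hom}_A(M',I)\to 0$ is exact, whence $\dim_k \mathrm{Hom}_A(M,I)=\dim_k \mathrm{Hom}_A(M',I)+\dim_k \mathrm{Hom}_A(M'',I)$. Thus $M\mapsto \dim_k \mathrm{Hom}_A(M,I)$ is additive on short exact sequences. Since the composition multiplicity $[M:S]$ is also additive on short exact sequences, both sides of the claimed identity are additive, and an induction on the length of a composition series of $M$ reduces the statement to the case in which $M=T$ is a simple module. (Here indecomposability of $M$ plays no role; the additivity argument applies to any finite-dimensional $M$.)

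It then remains to treat the base case $\dim_k \mathrm{Hom}_A(T,I)=[T:S]=\delta_{T,S}$ for $T$ simple. For this I would use that the injective hull $I$ is an essential extension of $S$, so that $S$ is an essential submodule of $I$; hence every nonzero submodule of $I$ meets $S$ nontrivially, and in particular $\mathrm{soc}(I)=S$. Any nonzero homomorphism $T\to I$ is injective (as $T$ is simple) with simple image contained in $\mathrm{soc}(I)=S$. Therefore, if $T\not\cong S$ there is no nonzero map and $\mathrm{Hom}_A(T,I)=0=[T:S]$; whereas if $T\cong S$ we obtain $\mathrm{Hom}_A(S,I)=\mathrm{Hom}_A(S,\mathrm{soc}(I))=\mathrm{End}_A(S)$, which by Schur's Lemma and the hypothesis that $k$ is algebraically closed is isomorphic to $k$, so $\dim_k \mathrm{Hom}_A(S,I)=1=[S:S]$. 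This settles the base case and completes the induction.

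The only genuinely delicate point is the base case, and within it the identification $\mathrm{soc}(I)=S$; everything else is formal. I expect the main (and fairly modest) obstacle to be justifying the socle identification directly from the definition of the injective hull as an essential injective extension, together with the reduction of $\mathrm{Hom}_A(S,I)$ to $\mathrm{End}_A(S)$ via the observation that the image of a simple module must lie in the socle.
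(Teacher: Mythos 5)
Your proof is correct and complete: the additivity of $\dim_k \mathrm{Hom}_A(-,I)$ via exactness of $\mathrm{Hom}_A(-,I)$, the reduction to the simple case by induction on composition length, and the identification $\mathrm{soc}(I)=S$ from essentiality all hold, and Schur's lemma over the algebraically closed field $k$ (which the paper assumes throughout) settles the base case. The paper itself states this lemma without proof, as a standard textbook fact; your argument is the standard one, and your remark that indecomposability of $M$ is not needed is accurate.
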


A similar result holds for  the projective cover of~$S$.
\vspace{.1in}

%We say that an algebra A is {\it representation-finite} if there is only a finite number of isomorphisms classes of indecomposable A-modules.

The Auslander-Reiten quiver of $\mbox{mod}\, A$, denoted by $\Gamma_A$, is a translation quiver with vertices the classes of isomorphisms of indecomposable A-modules. We denote by $[M]$ the vertex corresponding to an indecomposable $A$-module $M$. There is an arrow $[M]\rightarrow [N]$ between two vertices in $\Gamma_A$ if and only if there is an irreducible
morphism from $M$ to $N$ in $\mbox{mod}\,A$.

From now on, we do not distinguish between the indecomposable $A$-modules and the corresponding vertices of $\Gamma_A$.

For a finite dimensional algebra over an algebraically closed field, it is known that an arrow  $[M] \rightarrow [N]$ of $\Gamma_A$ has valuation $(a,a)$, that is, there is a minimal right almost split
morphism $aM \oplus X \rightarrow N$ where $M, N$ are indecomposable $A$-modules and $M$ it is not a summand of $X$, and there
is a minimal left almost split morphism $M  \rightarrow aN\oplus Y$ where $M, N$ are indecomposable  $A$-modules and $N$ is not
a summand of $Y$. We say that the arrow has trivial valuation if $a = 1$.  In particular, if $A$ is a representation-finite algebra then the valuation of the arrows of $\Gamma_A$ are trivial.

We say that a component $\Gamma$ of $\Gamma_A$ is standard if the full subcategory of $\mbox{mod}\,A$ generated by the
modules of $\Gamma$  is equivalent to the mesh category $k(\Gamma)$ of $\Gamma$, see \cite{BG}. The mesh-category of $\Gamma$, $k(\Gamma)$, is the quotient category
$k\Gamma/ I$ where $I$ is the mesh ideal of $\Gamma_A$.

For a finite dimensional algebra over an algebraically closed field, it is known
that  if $A$ is  representation-finite  then $\Gamma_A$ is standard.

 Following \cite{CPT},  we say that a component $\Gamma$ of $\Gamma_A$ is with length if parallel paths in $\Gamma$ has the same length. Otherwise, we say that $\Gamma$ is a component without length. We observe that a component $\Gamma$ of $\Gamma_A$ without length may have cycles. 
\vspace{.1in}

For unexplained notions in representation theory of algebras, we refer the reader to \cite{ASS} and \cite{ARS}.
\end{sinob}

\begin{sinob} \label{rad}%{\bf On the radical}
%\vspace{.1in}

Consider  $X,Y \in \mbox{mod}\,A$. The {\it Jacobson radical} of $\mbox{Hom}_A(X,Y)$, denoted by   $\Re(X,Y)$,  is
the set of all the morphisms $f: X \rightarrow Y$ such that, for each
$M \in \mbox{ind}\,A$, each $h:M \rightarrow X$ and each $h^{\prime }:Y\rightarrow M$
the composition $h^{\prime }fh$ is not an isomorphism. For $n \geq 2$, the powers of the radical
are inductively  defined  as $\Re^{n}(X,Y)= \Re^{n-1}(M,Y)\Re(X,M) = \Re (M,Y)\Re^{n-1}(X,M)$ for some $M \in \mbox{mod}\,A$.

A morphism $f :X  \rightarrow  Y$, with $X,Y \in \mbox{ mod}\,A$,
is called {\it irreducible} provided it does not split
and whenever $f = gh$, then either $h$ is a split monomorphism or $g$ is a
split epimorphism. If $X, Y$ are indecomposable $A$-modules, then $$\mbox{Irr}(X, Y) = \Re (X, Y)/\Re^2(X, Y)$$ is a
$k$-bimodule. 
%\vspace{.05in}

As a consequence, if $f : X \rightarrow Y$ is an irreducible  morphism between indecomposable $A$-modules and $\mbox{dim}_{k} \mbox{Irr}(X, Y)=1$ then any other irreducible morphism $g: X \rightarrow Y$ in $\mbox{mod}\, A$ can be written as $g= \alpha f+ \mu$, where $\alpha \in k-\{0\}$ and $\mu \in \Re^2(X, Y)$.
\end{sinob}

\begin{sinob}

It is well-known that an algebra $A$ is representation-finite
if and only if there is a positive integer $n$ such that $\Re^n(X,Y)=0$ for each $A$-module $X$ and $Y$, see \cite[V, Theorem 7.7]{ARS}.
The minimal lower bound $m$ such that $\Re^m(\mbox{mod}\,A)=0$ is called the \textit{nilpotency index} of
$\Re(\mbox{mod}\,A)$. We denote it by $r_A$.

We denote by $P_a$, $I_a$ and  $S_a$ the projective, the injective and the simple $A$-module corresponding to the
vertex $a$ in $Q_A$, respectively.

\begin{defi} By a non-zero path of irreducible morphisms, we mean a non-zero composition of irreducible morphisms between indecomposable modules in $\emph{mod}\,A$.
\end{defi}

\begin{defi} \label{largo}
Let $A$ be a finite dimensional $k$-algebra over an algebraically closed field. We say that a non-zero path $\varphi : X \rightarrow Y$ of irreducible morphisms between indecomposable $A$-modules  in $\mbox{mod}\,A$ is of the length $n$ if $\varphi \in \Re^{n}(X,Y)\backslash \Re^{n+1}(X,Y)$.
 \end{defi}

As an immediate consequence of \cite[Proposition 3.11]{CG2} and its dual,
to compute the nilpotency index of the radical of a module
category of a representation-finite algebra, it is enough to analyze
the vertices which are neither sinks nor sources in $Q_A$, as we state below.

\begin{teo}\label{nilpopf} \cite[Theorem 3.16]{CG2}
Let $A=kQ_A/I_A$ be a representation-finite algebra.
Consider $\mathcal{V}$ to be the set of vertices of $Q_A$ which are neither sinks nor sources.
Suppose that $\mathcal{V}\neq \emptyset$. Then the  nilpotency index of $\Re(\emph{mod}\,A)$ is
$\emph{max}\,\{r_a+1\}_{a \in \mathcal{V}}$, where $r_a$ is the  length of the non-zero path $P_a \rightsquigarrow S_a \rightsquigarrow I_a$ with $a \in \mathcal{V}$.
\end{teo}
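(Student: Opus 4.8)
The plan is to deduce the statement from the known global description of the nilpotency index, together with two local comparisons that make sinks and sources superfluous. Recall first the standard fact for a representation-finite algebra: $r_A=\max_{a\in (Q_A)_0}\{r_a+1\}$, where $r_a$ is the length of the longest non-zero path $P_a\rightsquigarrow S_a\rightsquigarrow I_a$. Indeed such a path of length $\ell$ is a non-zero element of $\Re^{\ell}(\mbox{mod}\,A)$, so $\Re^{\ell}\neq 0$, while the fact that the longest path over all vertices has length $\ell$ forces $\Re^{\ell+1}(\mbox{mod}\,A)=0$. Since $\mathcal{V}\subseteq (Q_A)_0$ gives $\max_{a\in\mathcal{V}}\{r_a+1\}\le r_A$ for free, the whole task reduces to showing that this maximum is already attained at a vertex that is neither a sink nor a source. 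Writing $M=r_A$ and $\mathcal{W}=\{a\in (Q_A)_0 : r_a+1=M\}$ for the set of vertices realizing the bound, it suffices to prove $\mathcal{W}\cap\mathcal{V}\neq\emptyset$.

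The two comparison facts I would establish are the following. If $a$ is a source, then $I_a=S_a$ is simple (there are no non-trivial paths ending at $a$), so $\mbox{End}(I_a)\simeq k$, and every arrow $a\to y$ produces an irreducible morphism $I_y\to I_a$ between indecomposable injectives; the degree argument underlying \cite[Proposition 3.11]{CG2} then yields $r_a\le r_y$ for each such target $y$. Dually, if $a$ is a sink, then $P_a=S_a$ is simple, $\mbox{End}(P_a)\simeq k$, and every arrow $x\to a$ produces an irreducible morphism $P_a\to P_x$, whence $r_a\le r_x$ for each in-neighbour $x$. Thus a source is dominated by each of its targets and a sink by each of its in-neighbours; this is exactly the mechanism appearing later in Theorem~A, specialised to the simple injective (resp. projective) sitting at a source (resp. sink).

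With these in hand I would run a closure argument. Assume, towards a contradiction, that $\mathcal{W}\cap\mathcal{V}=\emptyset$, so every vertex of $\mathcal{W}$ is a sink or a source. If $a\in\mathcal{W}$ is a source, then by the first comparison every target $y$ of $a$ satisfies $r_y\ge r_a$, hence $r_y+1=M$ and $y\in\mathcal{W}$; by hypothesis $y$ is then a sink. Symmetrically, every in-neighbour of a sink of $\mathcal{W}$ lies in $\mathcal{W}$ and is a source. Since a source has no in-arrows and a sink no out-arrows, the set $\mathcal{W}$ together with its incident arrows is closed under passing to neighbours in $Q_A$; as $Q_A$ is connected it therefore coincides with this set, so $Q_A$ consists only of sources and sinks and $\mathcal{V}=\emptyset$, contradicting the hypothesis. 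Hence $\mathcal{W}\cap\mathcal{V}\neq\emptyset$, which gives $\max_{a\in\mathcal{V}}\{r_a+1\}=M=r_A$, as required. If $A$ is not assumed connected one argues componentwise, noting that once $\mathcal{V}\neq\emptyset$ a component all of whose vertices are sinks or sources cannot be the whole quiver.

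I expect the genuine obstacle to be the two comparison inequalities $r_a\le r_y$ and $r_a\le r_x$. Once the simple injective (resp. projective) at a source (resp. sink) is identified, each inequality is a statement about how the length of a non-zero path ending at $I_a$ (resp. starting at $P_a$) is controlled by the single irreducible morphism $I_y\to I_a$ (resp. $P_a\to P_x$); this is precisely where the degree theory of \cite{C} and \cite{CG2} is needed, and it is the substantive input. By contrast, the combinatorial closure argument that upgrades the one-sided elimination of sources (and the dual elimination of sinks) to the simultaneous elimination of both is elementary.
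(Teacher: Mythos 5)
The paper does not actually prove this statement: it is quoted from \cite[Theorem 3.16]{CG2} and justified only by the remark that it is ``an immediate consequence of \cite[Proposition 3.11]{CG2} and its dual.'' Your reconstruction is consistent with that remark and is essentially sound. You reduce to the known identity $r_A=\max_{a\in (Q_A)_0}\{r_a+1\}$ (taken over \emph{all} vertices), establish that a source is dominated by each of its targets and a sink by each of its in-neighbours, and finish with an elementary closure argument on the set of maximizing vertices, which for connected $Q_A$ forces $\mathcal{V}=\emptyset$ if every maximizer were a sink or a source. The key specialisation --- a source $a$ has $I_a=S_a$ and a sink $a$ has $P_a=S_a$, so $\mathrm{End}(I_a)\simeq k$, resp.\ $\mathrm{End}(P_a)\simeq k$, and Corollary~\ref{irred} applies --- is exactly right, and it introduces no circularity, since Theorem~\ref{teoirred} and Corollary~\ref{irred} are proved in the paper without recourse to the present statement.

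Two points deserve attention. First, you assert without proof that an arrow $a\to y$ out of a source yields an irreducible morphism $I_y\to I_a$ (dually, that an arrow $x\to a$ into a sink yields an irreducible morphism $P_a\to P_x$); this is a genuine hypothesis of Corollary~\ref{irred}, not a formality. It is true, but it needs the observation that, $I_A$ being admissible, the arrow $x\to a$ spans a one-dimensional submodule of $\mathrm{rad}\,P_x$ isomorphic to $S_a=P_a$ whose complement (the sum of the submodules generated by the remaining arrows at $x$) is again a submodule meeting it trivially; only because $P_a$ is thus a direct summand of $\mathrm{rad}\,P_x$ is the composite $P_a\hookrightarrow \mathrm{rad}\,P_x\hookrightarrow P_x$ irreducible, and dually for the injective side. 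Second, your aside on the disconnected case does not work as stated: if $Q_A$ had a bipartite connected component (all vertices sinks or sources) on which the maximal $r_a$ is attained, the conclusion would fail; the statement tacitly assumes $A$ connected, which is the standard convention here and should simply be said.
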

\end{sinob}

\begin{sinob}

We recall the following  lemmas proved in \cite{C}, which shall be useful throughout this work.

\begin{lema} \cite[Lemma 2.1]{C}  \label{composite} Let $A$ be a finite dimensional algebra over an
algebraically closed field $k$. Let $f\in
\Re^n(P_a,S_a)\backslash\Re^{n+1}(P_a,S_a)$ and $g \in
\Re^m(S_a,I_a)\backslash\Re^{m+1}(S_a,I_a)$ for some vertex $a \in Q_0$. Then $gf \in
\Re^{n+m}(P_a,I_a)\backslash \Re^{n+m+1}(P_a,I_a).$
\end{lema}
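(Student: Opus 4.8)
The plan is first to reduce to the canonical maps. By Lemma \ref{multiplicity} applied with $M=S_a$, $S=S_a$, $I=I_a$ we get $\dim_k\mathrm{Hom}_A(S_a,I_a)=[S_a:S_a]=1$, and by its projective analogue (with $P=P_a$ the projective cover of $S_a$ and $M=S_a$) we get $\dim_k\mathrm{Hom}_A(P_a,S_a)=[S_a:S_a]=1$. Hence $\mathrm{Hom}_A(P_a,S_a)=k\pi$ and $\mathrm{Hom}_A(S_a,I_a)=k\iota$, where $\pi:P_a\to S_a$ is the canonical projection onto the top $\mathrm{top}(P_a)=S_a$ and $\iota:S_a\to I_a$ the canonical inclusion of the socle $\mathrm{soc}(I_a)=S_a$. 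Since $f$ and $g$ lie outside $\Re^{n+1}$ and $\Re^{m+1}$ respectively, they are in particular nonzero, so $f=\lambda\pi$ and $g=\mu\iota$ with $\lambda,\mu\in k\setminus\{0\}$; as each $\Re^{k}(-,-)$ is a $k$-subspace, the radical layer is unchanged under nonzero scaling, so $\pi\in\Re^{n}\setminus\Re^{n+1}$, $\iota\in\Re^{m}\setminus\Re^{m+1}$, and $gf=\lambda\mu\,\iota\pi$. It therefore suffices to prove $\iota\pi\in\Re^{n+m}\setminus\Re^{n+m+1}$; note in passing that this forces $n$ and $m$ to be the intrinsic layers of $\pi$ and $\iota$.

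The inclusion $\iota\pi\in\Re^{n+m}(P_a,I_a)$ is immediate from the ideal property of the radical powers, since $\iota\in\Re^{m}(S_a,I_a)$ and $\pi\in\Re^{n}(P_a,S_a)$ give $\iota\pi\in\Re^{m}(S_a,I_a)\,\Re^{n}(P_a,S_a)\subseteq\Re^{n+m}(P_a,I_a)$. The whole content of the lemma is thus the lower bound $\iota\pi\notin\Re^{n+m+1}(P_a,I_a)$, and I expect this to be the main obstacle, precisely because it cannot be obtained by a formal cancellation: the epimorphism $\pi$ admits no section and the monomorphism $\iota$ no retraction (unless $S_a$ happens to be projective, resp. injective), so there is no immediate way to strip $\iota$ or $\pi$ off a factorization witnessing $\iota\pi\in\Re^{n+m+1}$ and lower its layer below $n$, resp. $m$.

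To overcome this I would prove the two \emph{reflection} statements that composing with the projective cover $\pi$ (resp. the injective envelope $\iota$) raises the radical layer by exactly $n$ (resp. exactly $m$); concretely, that $\iota\phi\in\Re^{i+m}(P_a,I_a)$ forces $\phi\in\Re^{i}(P_a,S_a)$ for every $\phi$, the case $i=n+1$, $\phi=\pi$ then giving the desired contradiction. Using the standard duality $D=\mathrm{Hom}_k(-,k)$, which carries $P_a$ to the projective cover of $DS_a$ over $A^{\mathrm{op}}$ and sends $\iota$ to such a projective cover, the two reflection statements are dual, so it is enough to treat one of them. That one I would establish by induction along a factorization of the map into irreducible morphisms, peeling off one irreducible step at a time: at each step the one-dimensionality of the relevant Hom-space to, resp. from, the simple $S_a$ keeps the layer rigid, and the exact unit increase of the layer is governed by Liu's degree theory, through the degrees of the irreducible morphisms $\mathrm{rad}\,P_a\to P_a$ and $I_a\to I_a/S_a$ highlighted in the Introduction. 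This degree-theoretic propagation of radical layers through the simple is the crux and where I anticipate the real work; once it yields layer-additivity, combining it with the reduction and the elementary upper bound gives $\iota\pi\in\Re^{n+m}\setminus\Re^{n+m+1}$ and hence the claim.
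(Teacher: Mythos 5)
First, a point of reference: the paper does not prove this statement at all --- Lemma \ref{composite} is recalled verbatim from \cite[Lemma 2.1]{C} as a preliminary, so there is no in-paper proof to compare yours against, and I can only judge your argument on its own terms. The parts you do carry out are correct: $\dim_k\mathrm{Hom}_A(P_a,S_a)=\dim_k\mathrm{Hom}_A(S_a,I_a)=1$ by Lemma \ref{multiplicity} and its projective analogue, so $f$ and $g$ are nonzero scalar multiples of the canonical projection $\pi$ and inclusion $\iota$, the radical layers are invariant under nonzero scaling, and $\iota\pi\in\Re^{n+m}(P_a,I_a)$ follows from $\Re^{m}\Re^{n}\subseteq\Re^{n+m}$. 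You also correctly identify that the whole content is $\iota\pi\notin\Re^{n+m+1}(P_a,I_a)$. The gap is that you never prove this. Your ``reflection statement'' is not a reduction: since $\mathrm{Hom}_A(P_a,S_a)=k\pi$, the assertion ``$\iota\phi\in\Re^{i+m}$ forces $\phi\in\Re^{i}$'' carries content only for $\phi$ a nonzero multiple of $\pi$, and in the one instance you use it ($i=n+1$, $\phi=\pi$) it says precisely $\iota\pi\notin\Re^{n+m+1}$. You have restated the problem, not isolated an independently provable lemma.

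The mechanism you propose for closing this gap is also doubtful. You appeal to induction along a factorization into irreducible morphisms governed by the degrees of $\mathrm{rad}\,P_a\to P_a$ and $I_a\to I_a/S_a$; but (i) the lemma is stated for an arbitrary finite dimensional algebra, where finiteness of these degrees is not available and a morphism in $\Re^{i}\setminus\Re^{i+1}$ is in general only a sum of compositions of irreducibles modulo $\Re^{i+1}$, not a single chain one can peel; and (ii) in \cite{C} the degree statements about exactly those two irreducible morphisms are derived \emph{after} (and using) Lemma 2.1, so invoking them here risks circularity. What is missing is a direct argument with the decomposition $\Re^{n+m+1}=\Re^{m+1}\Re^{n}$: writing $\iota\pi=\sum_i\beta_i\alpha_i$ with $\alpha_i\in\Re^{n}(P_a,M_i)$ and $\beta_i\in\Re^{m+1}(M_i,I_a)$, one must exploit that $\iota\pi$ kills $\mathrm{rad}\,P_a$ and has image in $\mathrm{soc}\,I_a$ to force a factorization through $S_a$ at a forbidden depth, contradicting $\Re^{n+1}(P_a,S_a)=0$ or $\Re^{m+1}(S_a,I_a)=0$ (which your one-dimensionality observation does give you). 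As it stands, the crux of the lemma is acknowledged but not established.
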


\begin{lema} \cite[Lemma 2.2]{C} \label{refe} Let $A \simeq kQ/I$ be a finite dimensional algebra over an
algebraically closed field of finite representation type. Let $f:P_a \longrightarrow I_b$ be a non-zero morphism between the
indecomposable projective and injective $A$-modules corresponding
to some vertices $a,b \in Q_0,$ respectively. If $f$ does not factor
through the simple $S_a,$ then there exists a non isomorphism
$\varphi: I_b \longrightarrow I_a$ such that $\varphi f$ is
non-zero and factors through $S_a$.
\end{lema}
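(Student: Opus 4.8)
The plan is to work with the image of $f$ and then use the injectivity of $I_a$ to extend a carefully chosen morphism. First I would record two structural facts: the indecomposable projective $P_a$ has simple top $S_a$, and the indecomposable injective $I_a$ is the injective hull of $S_a$, so that the canonical composite $\pi\colon P_a \twoheadrightarrow S_a \hookrightarrow I_a$ is (up to a non-zero scalar) the only non-zero morphism $P_a \to I_a$ factoring through $S_a$. Since $f \neq 0$, I would first check that $\ker f \subseteq \operatorname{rad} P_a$: otherwise $\ker f + \operatorname{rad} P_a = P_a$, and Nakayama's lemma would force $\operatorname{im} f = 0$. Consequently $\operatorname{im} f$ has simple top $S_a$; and since $f$ corestricts to a surjection $\bar f \colon P_a \twoheadrightarrow \operatorname{im} f$, one has $f(\operatorname{rad} P_a) = \operatorname{rad}(\operatorname{im} f)$ together with an identification $\operatorname{im} f / \operatorname{rad}(\operatorname{im} f) \cong S_a$.

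Next I would build the desired morphism on the submodule $\operatorname{im} f \subseteq I_b$. Let $\psi \colon \operatorname{im} f \to I_a$ be the non-zero composite $\operatorname{im} f \twoheadrightarrow \operatorname{im} f/\operatorname{rad}(\operatorname{im} f) \cong S_a \hookrightarrow I_a$. Since $I_a$ is injective and $\operatorname{im} f \hookrightarrow I_b$ is a monomorphism, $\psi$ extends along this inclusion to a morphism $\varphi \colon I_b \to I_a$ with $\varphi|_{\operatorname{im} f} = \psi$. This $\varphi$ is the candidate.

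Finally I would verify the three required properties. Using $\operatorname{Hom}(P_a, S_a) \cong k$, the surjection $p\colon \operatorname{im} f \twoheadrightarrow S_a$ precomposed with $\bar f$ is a non-zero scalar multiple $\lambda$ of the canonical surjection $P_a \twoheadrightarrow S_a$; writing $f = \iota' \bar f$ for the inclusion $\iota'\colon \operatorname{im} f \hookrightarrow I_b$, one gets $\varphi f = \psi \bar f = \lambda\,\pi$ with $\lambda \neq 0$. Thus $\varphi f$ is non-zero and factors through $S_a$ by construction. It remains to see that $\varphi$ is not an isomorphism: if it were, then $f = \varphi^{-1}(\varphi f) = \lambda\,\varphi^{-1}\pi$ would itself factor through $S_a$, contradicting the hypothesis. (When $a \neq b$ this is automatic, since $I_a \not\cong I_b$; the contradiction argument covers both cases uniformly.)

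The main obstacle — really the only delicate point — is the identification $\operatorname{im} f / \operatorname{rad}(\operatorname{im} f) \cong S_a$ together with the verification that $\varphi f$ lands on the canonical map $\pi$ rather than vanishing; once this is in place, the extension step and the non-isomorphism step are formal. I would also remark that representation-finiteness is not actually used in this argument: it relies only on $P_a$ having simple top $S_a$, on $I_a$ being the injective hull of $S_a$, and on the injectivity of $I_a$.
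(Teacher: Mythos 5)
Your argument is correct. Note that the paper itself does not prove this lemma; it is quoted verbatim from \cite{C} (Lemma 2.2 there), so there is no in-text proof to compare against. Your reasoning is the natural one and each step checks out: $P_a$ is local, so $f\neq 0$ forces $\ker f\subseteq \operatorname{rad}P_a$ and hence $\operatorname{im}f$ has simple top $S_a$ (using $g(\operatorname{rad}M)=\operatorname{rad}\,g(M)$ for surjections over an artin algebra); the extension of $\operatorname{im}f\twoheadrightarrow S_a\hookrightarrow I_a$ along $\operatorname{im}f\hookrightarrow I_b$ exists by injectivity of $I_a$; the identification $\varphi f=\lambda\pi$ with $\lambda\neq 0$ follows from $\dim_k\operatorname{Hom}_A(P_a,S_a)=1$; and the non-isomorphism claim follows by contradiction from the hypothesis that $f$ does not factor through $S_a$. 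Your closing observation is also accurate and worth keeping: representation-finiteness plays no role in this particular lemma, which holds for any finite dimensional algebra.
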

We observe that there is also a dual result of Lemma \ref{refe}. We state it below.
\begin{lema} \label{refe-dual} Let $A \simeq kQ/I$ be a finite dimensional algebra over an
algebraically closed field of finite representation type. Let $f:P_a \longrightarrow I_b$ be a non-zero morphism between the
indecomposable  projective and injective $A$-modules corresponding
to some vertices $a,b \in Q_0,$ respectively. If $f$ does not factor
through the simple $S_b,$ then there exists a non isomorphism
$\varphi: P_b \longrightarrow P_a$ such that $\varphi f$ is
non-zero and factors through $S_b$.
\end{lema}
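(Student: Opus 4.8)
The plan is to derive this statement from Lemma~\ref{refe} by transporting it through the standard $k$-linear duality $D=\mbox{Hom}_k(-,k)$. Recall that $D$ restricts to a contravariant duality $\mbox{mod}\,A \to \mbox{mod}\,A^{op}$ which interchanges projective and injective modules while fixing the vertices: writing $P_c^{op}$, $I_c^{op}$, $S_c^{op}$ for the indecomposable projective, injective and simple $A^{op}$-modules at a vertex $c$, one has $D(I_c)\cong P_c^{op}$, $D(P_c)\cong I_c^{op}$ and $D(S_c)\cong S_c^{op}$; moreover $D$ reverses composition and $D\circ D\cong\mathrm{id}$. Since $A$ is representation-finite, so is $A^{op}$, and $A^{op}\simeq kQ^{op}/I^{op}$ is again of the form required by Lemma~\ref{refe}.

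First I apply $D$ to the given non-zero morphism $f\colon P_a\to I_b$, obtaining a non-zero morphism $D(f)\colon P_b^{op}\to I_a^{op}$ between the indecomposable projective and injective $A^{op}$-modules at the vertices $b$ and $a$, respectively. A morphism factors through a simple module if and only if its image under the duality factors through the dual simple; hence the hypothesis that $f$ does not factor through $S_b$ is equivalent to the statement that $D(f)$ does not factor through $D(S_b)\cong S_b^{op}$. Thus $D(f)$ is a non-zero morphism from a projective to an injective $A^{op}$-module that does not factor through the simple at its \emph{projective} vertex $b$, which is precisely the hypothesis of Lemma~\ref{refe} for $A^{op}$, with the roles of the two vertices interchanged.

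Applying Lemma~\ref{refe} to $A^{op}$ and $D(f)$ then produces a non-isomorphism $\psi\colon I_a^{op}\to I_b^{op}$ such that $\psi\circ D(f)$ is non-zero and factors through $S_b^{op}$. Finally I apply $D$ once more and use $D\circ D\cong\mathrm{id}$ together with $D(I_a^{op})\cong P_a$ and $D(I_b^{op})\cong P_b$: the morphism $\varphi:=D(\psi)\colon P_b\to P_a$ is again a non-isomorphism, and $D(\psi\circ D(f))\cong f\circ\varphi$. Since $\psi\circ D(f)$ is non-zero and factors through $S_b^{op}$, the composite $P_b\xrightarrow{\varphi}P_a\xrightarrow{f}I_b$ is non-zero and factors through $D(S_b^{op})\cong S_b$, which is exactly the assertion of the lemma.

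The argument is essentially formal, so I do not expect a genuine obstacle; the only point requiring care is the bookkeeping under the duality, namely checking that $D$ swaps the projective/injective roles while fixing vertices, that the non-factorization hypothesis for $S_b$ becomes the non-factorization hypothesis at the projective vertex needed to invoke Lemma~\ref{refe}, and that reversing composition order places the final composite as $f\circ\varphi\colon P_b\to I_b$ rather than in the wrong slot. All the mathematical content is contained in Lemma~\ref{refe}, and Lemma~\ref{refe-dual} is its formal dual.
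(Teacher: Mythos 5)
Your proof is correct and is exactly the argument the paper has in mind: the paper offers no written proof of Lemma~\ref{refe-dual}, merely asserting it is ``the dual result'' of Lemma~\ref{refe}, and your careful transport through $D=\mathrm{Hom}_k(-,k)$ to $A^{op}$ is the standard way to make that assertion precise. Your bookkeeping (swapping projectives and injectives while fixing vertices, translating the non-factorization through $S_b$, and landing on the composite $f\circ\varphi\colon P_b\to I_b$, which is what the paper's slightly loose notation ``$\varphi f$'' must mean) is all accurate.
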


\begin{lema} \cite[Lemma 2.4]{C}  \label{proj-inj} Let $A \simeq kQ/I$ be a finite dimensional algebra over an algebraically closed field of finite representation type and $a \in (Q_A)_0$. Let  $r_a$ be the length of the  non-zero path $P_a \rightsquigarrow S_a \rightsquigarrow I_a$. Then the following statements hold.
\begin{enumerate}
\item[(a)] Every  non-zero morphism $f: P_a \longrightarrow I_a$ that factors
through the simple $A$-module $S_a$ is such that $f \in \Re^{r_a}(P_a,I_a)\backslash \Re^{r_a +1}(P_a,I_a).$ 

\item[(b)] Every non-zero morphism $f: P_a \longrightarrow I_a$ which  does not factor
through the simple $A$-module $S_a$ is such that $f \in
\Re^{k}(P_a,I_a)\backslash \Re^{k+1}(P_a,I_a)$ with $0 \leq k < r_a$.
\end{enumerate}
\end{lema}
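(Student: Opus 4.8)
The plan is to prove (a) directly and then deduce (b) from (a) together with Lemma \ref{refe}. The key observation for (a) is that both $\mbox{Hom}_A(P_a,S_a)$ and $\mbox{Hom}_A(S_a,I_a)$ are one-dimensional over $k$. Indeed, applying the dual of Lemma \ref{multiplicity} to $M=S_a$ gives $\dim_k \mbox{Hom}_A(P_a,S_a)=[S_a:S_a]=1$, and applying Lemma \ref{multiplicity} itself to $M=S_a$ gives $\dim_k \mbox{Hom}_A(S_a,I_a)=[S_a:S_a]=1$. Hence the canonical projection $\pi:P_a\to S_a$ onto the top spans the first space and the canonical inclusion $\iota:S_a\to I_a$ of the socle spans the second. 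Consequently, if a non-zero $f:P_a\to I_a$ factors as $f=gh$ with $h:P_a\to S_a$ and $g:S_a\to I_a$, then $h\in k\pi$ and $g\in k\iota$, so $f=\lambda\,\iota\pi$ for some $\lambda\in k\backslash\{0\}$.

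Next I would compute the radical degrees of $\pi$ and $\iota$. Writing $n$ and $m$ for the lengths of the non-zero paths $P_a\rightsquigarrow S_a$ and $S_a\rightsquigarrow I_a$ respectively, Definition \ref{largo} gives $\pi\in\Re^{n}(P_a,S_a)\backslash\Re^{n+1}(P_a,S_a)$ and $\iota\in\Re^{m}(S_a,I_a)\backslash\Re^{m+1}(S_a,I_a)$. Lemma \ref{composite} then yields $\iota\pi\in\Re^{n+m}(P_a,I_a)\backslash\Re^{n+m+1}(P_a,I_a)$; since $\iota\pi$ realizes the path $P_a\rightsquigarrow S_a\rightsquigarrow I_a$, this says precisely that $r_a=n+m$. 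As $f=\lambda\,\iota\pi$ with $\lambda\neq 0$, the same membership holds for $f$, which proves (a).

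For (b), let $f:P_a\to I_a$ be non-zero and not factoring through $S_a$, and write $f\in\Re^{k}(P_a,I_a)\backslash\Re^{k+1}(P_a,I_a)$; such a finite $k\geq 0$ exists because $\Re^{\infty}(\mbox{mod}\,A)=0$ for $A$ representation-finite. Applying Lemma \ref{refe} with $b=a$ produces a non-isomorphism $\varphi:I_a\to I_a$ such that $\varphi f$ is non-zero and factors through $S_a$. Since $I_a$ is indecomposable, $\varphi\in\Re(\mbox{End}_A\,I_a)=\Re(I_a,I_a)$, hence $\varphi f\in\Re^{k+1}(P_a,I_a)$. On the other hand, part (a) applied to $\varphi f$ gives $\varphi f\in\Re^{r_a}(P_a,I_a)\backslash\Re^{r_a+1}(P_a,I_a)$. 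If we had $k\geq r_a$, then $\Re^{k+1}(P_a,I_a)\subseteq\Re^{r_a+1}(P_a,I_a)$ would force $\varphi f\in\Re^{r_a+1}(P_a,I_a)$, a contradiction; therefore $k<r_a$, proving (b).

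The step I expect to be the main obstacle is establishing that every factorization through $S_a$ is, up to a nonzero scalar, the canonical composite $\iota\pi$: this is what pins the degree of such an $f$ to exactly $r_a$ rather than merely bounding it, and it depends on the one-dimensionality of both Hom spaces supplied by Lemma \ref{multiplicity} and its dual. Once this is in place, the additivity of degrees along the composite (Lemma \ref{composite}) and the rotation of a non-factoring morphism into a factoring one (Lemma \ref{refe}) make the remaining bookkeeping routine.
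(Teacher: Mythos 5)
The paper does not prove this lemma itself (it is quoted from \cite{C}), but your argument is correct and uses exactly the ingredients the paper assembles around it: the one-dimensionality of $\mathrm{Hom}_A(P_a,S_a)$ and $\mathrm{Hom}_A(S_a,I_a)$ via Lemma \ref{multiplicity} to pin every factoring morphism to a scalar multiple of $\iota\pi$, Lemma \ref{composite} to add the degrees, and Lemma \ref{refe} with $b=a$ to reduce (b) to (a). This is essentially the original proof, so there is nothing to correct.
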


%\begin{defi} By a non-zero path of irreducible morphisms, we mean a non-zero composition of irreducible morphisms between indecomposable modules in $\mbox{mod}\,A$.
%\end{defi}

The next theorem is fundamental for the proofs of our results.

\begin{teo} \cite[Theorem 2.7]{CT} \label{explotan}  Let $\Gamma$ be a standard component of $\Gamma_A$ with trivial valuation and $X_i \in \Gamma$, for $i=1, \dots,  n + 1$. The following conditions are equivalent:
\begin{enumerate}
\item[(a)] There are irreducible morphisms $h_i : X_i \rightarrow X_{i+1}$ such that $h_n \dots h_1 \neq 0$ and $h_n \dots  h_1 \in \Re^{n+1}$.
\item[(b)]  There is a path $X_1 \stackrel{f_1}\rightarrow X_2 \stackrel{f_2}\rightarrow X_3 \rightarrow \dots \rightarrow X_n \stackrel{f_n}\rightarrow X_{n+1}$ of irreducible morphisms with zero composition and a morphism $\epsilon= \epsilon_n ... \epsilon_1 \neq 0$ where $\epsilon_i = f_i$  or $\epsilon_i \in \Re^{2}(X_i, X_{i+1})$.
\end{enumerate}
\end{teo}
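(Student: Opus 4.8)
The plan is to transport the entire question into the mesh category $k(\Gamma)$. Since $\Gamma$ is standard, $k(\Gamma)$ is equivalent, as a $k$-category, to the full subcategory of $\mbox{mod}\,A$ generated by the modules of $\Gamma$, and under this equivalence $\Re^m$ between modules of $\Gamma$ corresponds to the $m$-th power of the radical of $k(\Gamma)$. The engine of the argument is the observation that the mesh ideal is generated by the mesh relations $\sum \sigma(\alpha)\alpha$, which are homogeneous of degree $2$ for the grading of $k\Gamma$ by path length. Hence $k(\Gamma)=\bigoplus_d C_d$ is graded by path length and its radical powers coincide with the length filtration: an element of $\operatorname{Hom}_{k(\Gamma)}(X_1,X_{n+1})$ lies in $\mathrm{rad}^{m}$ iff all its homogeneous components of length $<m$ vanish. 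In particular, a nonzero composite of exactly $n$ irreducible morphisms has a well-defined length-$n$ component, and it lands in $\Re^{n+1}$ precisely when that component is zero.

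For (a)$\Rightarrow$(b): since the valuation is trivial, $\dim_k \mathrm{Irr}(X_i,X_{i+1})=1$, so each $h_i$ decomposes as $h_i=c_if_i+h_i'$ with $f_i$ a fixed irreducible morphism (the arrow $X_i\to X_{i+1}$), $c_i\in k\setminus\{0\}$ and $h_i'\in\Re^2(X_i,X_{i+1})$. The length-$n$ component of $h_n\cdots h_1$ equals $(c_n\cdots c_1)\,f_n\cdots f_1$; as $h_n\cdots h_1\in\Re^{n+1}$ this component vanishes, so $f_n\cdots f_1=0$. This is already the path of irreducible morphisms with zero composition required by (b). To produce $\epsilon$, I expand $h_n\cdots h_1=\prod_i(c_if_i+h_i')$: the term with every factor equal to $c_if_i$ is $(c_n\cdots c_1)f_n\cdots f_1=0$, so the nonzero morphism $h_n\cdots h_1$ is the sum of the remaining terms, and at least one of them is nonzero. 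Reading off that term and dividing by the nonzero scalars gives a nonzero $\epsilon=\epsilon_n\cdots\epsilon_1$ with $\epsilon_i=f_i$ or $\epsilon_i=h_i'\in\Re^2(X_i,X_{i+1})$, as wanted.

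For (b)$\Rightarrow$(a): starting from $f_n\cdots f_1=0$ and a nonzero $\epsilon=\epsilon_n\cdots\epsilon_1$ with $\epsilon_i\in\{f_i\}\cup\Re^2(X_i,X_{i+1})$, I set $\eta_i=\epsilon_i$ when $\epsilon_i\in\Re^2$ and $\eta_i=0$ otherwise, and consider $h_i(t)=f_i+t\,\eta_i$ for $t\in k$. Each $h_i(t)$ is irreducible because its class in $\mathrm{Irr}(X_i,X_{i+1})$ is that of $f_i$. Expanding, $h_n(t)\cdots h_1(t)=\sum_{T}t^{|T|}\,w_T$, where the $T=\varnothing$ term is $f_n\cdots f_1=0$ and every surviving term has length $\ge n+1$, so the composite lies in $\Re^{n+1}$ for every $t$. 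Viewing it as a polynomial in $t$ with coefficients in the finite-dimensional space $\operatorname{Hom}_A(X_1,X_{n+1})$, the coefficient of the top power $t^{s}$ (with $s$ the number of perturbed factors) is exactly $\epsilon\neq0$; since $k$ is algebraically closed, hence infinite, there is a value of $t$ for which $h_n(t)\cdots h_1(t)\neq0$, which furnishes the irreducible morphisms required by (a).

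The step I expect to be the crux is the justification of the engine, namely that under standardness the radical filtration of $\mbox{mod}\,A$ restricted to $\Gamma$ really coincides with the path-length filtration of $k(\Gamma)$. This needs that morphisms in $\Re^m$ between modules of $\Gamma$ are computed inside the mesh category, so that no composite escapes through other components, and that the equivalence granted by standardness identifies the two radicals while respecting the length grading; the triviality of the valuation is used throughout to keep each $\mathrm{Irr}(X_i,X_{i+1})$ one-dimensional, so that the arrow $f_i$ is canonical up to a nonzero scalar and $\Re^2$. Once this identification is in place, both implications reduce, as above, to homogeneous-component bookkeeping together with the use of an infinite field to rule out accidental cancellation.
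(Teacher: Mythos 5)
This theorem is not proved in the paper at all: it is quoted verbatim from \cite{CT} (Theorem 2.7 there), so there is no in-paper argument to compare yours against. Judged on its own terms, your proposal is a coherent route, and the two implications are handled correctly \emph{granted} your ``engine'': in (a)$\Rightarrow$(b) the vanishing of the length-$n$ homogeneous component forces $f_n\cdots f_1=0$ and the expansion of $\prod_i(c_if_i+h_i')$ produces the required $\epsilon$; in (b)$\Rightarrow$(a) the perturbation $h_i(t)=f_i+t\eta_i$ stays irreducible, lands in $\Re^{n+1}$ for every $t$, and is nonzero for some $t$ because the top coefficient of the polynomial is $\epsilon\neq 0$ and $k$ is infinite. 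The use of trivial valuation to make each $\operatorname{Irr}(X_i,X_{i+1})$ one-dimensional is also exactly right.

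The genuine gap is the engine itself, which you state but do not prove, and which is where essentially all of the content of the theorem is concentrated. Two separate facts are being asserted there. First, that the equivalence coming from standardness identifies $\Re^m$ of $\mbox{mod}\,A$ restricted to $\Gamma$ with the length filtration $\bigoplus_{d\geq m}$ of $k(\Gamma)$: the inclusion $\supseteq$ is immediate, but for $\subseteq$ one must control factorizations $X\to Z\to Y$ with $Z$ an indecomposable \emph{outside} $\Gamma$, which contribute to $\Re^m(X,Y)$ by definition but are invisible to the mesh-category grading. Such composites lie in $\Re^{\infty}(X,Y)$, so the argument needs $\Re^{\infty}|_{\Gamma}=0$ for a standard component (i.e.\ that standard implies generalized standard); this is true but is a nontrivial theorem, not a consequence of the homogeneity of the mesh relations. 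Second, even inside $\Gamma$ one needs that $\Re(X,Z)$ corresponds to $\bigoplus_{d\geq 1}C_d(X,Z)$ for \emph{all} pairs, including $X=Z$, before the inductive degree count for $\Re^m$ can start. Neither point is addressed beyond being flagged as ``the crux''. The published proof in \cite{CT} does not go through a global graded identification of the radical filtration; it argues directly with compositions of irreducible morphisms in the mesh category (in the spirit of Igusa--Todorov and Liu), precisely so as to avoid having to establish the filtration statement in full. As written, then, your argument reduces the theorem to an unproved statement that is at least as strong as the theorem itself.
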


For further information in a more general result than Theorem \ref{explotan}, we refer the reader to \cite{CLT}.
\end{sinob}

%\begin{teo} \cite[Theorem A]{CGS} \label{vertex-nilpo-length}
%Let $A \simeq kQ_A/I_A$ be a representation-finite monomial algebra. Assume that $\Gamma_A$ is a component with length. Then the  vertices of $Q_A$ that we have to consider in order to determine the nilpotency index of the radical of $\emph{mod}\,A$ are the ones involved in the zero-relations of $I_A$.
%Precisely,  $r_A = \emph{max}\, \{ r_{a}+1 \}_{a \in (R_A)_0}$. %Furthermore, all the vertices $x$ related to a fixed zero relation  determine paths from $P_x$ to $I_x$ going through $S_x$ of the same length. Therefore, it is enough to consider only one vertex in each zero-relation.
%\end{teo}

\section{General results}

%We start this section with the following definition. 

Now, we establish some notation that we shall use throughout this paper.

\begin{nota} \label{notacion} Let $A \simeq kQ_A/I_A$ be a finite dimensional $k$-algebra over an algebraically closed field. Let $P_a$, $I_a$, and $S_a$ be the projective, the injective, and the simple corresponding to the vertex $a$ in $Q_A$. We denote by $\varphi_{_a}$ the non-zero path of irreducible morphisms from $P_a$ to $I_a$ going through the simple $S_a$ and by $r_a$ the length of  $\varphi_{_a}$, whenever $A$ is a representation-finite algebra.
\end{nota}

%The next lemmas shall be useful for our considerations. {\color{red} Los dos lemas que siguen no los usamos, porque cambio el principio de la prueba, habria que sacarlos porque no suman nada son super basicos. }

%\begin{lema} \label{lema1} Let $A \simeq kQ_A/I_A$ be a finite dimensional $k$-algebra over an algebraically closed field. Assume that there is an irreducible morphism from $P_b$ to $P_a$. Then the morphism $\varphi_b:P_b \rightarrow I_b$   factors through $P_a$.
%\end{lema}
%\begin{proof} Let $f: P_{b}\rightarrow P_{a}$ be  an irreducible morphism. Then clearly $f$ is a monomorphism. Since $I_b$ is injective then there is a morphism $h: P_a \rightarrow I_{b}$ such that $hf=\varphi_b$, proving the result.
%\end{proof}

%A dual result of Lemma \ref{lema1} holds. Next, we only state the result.

%\begin{lema} \label{lema1-dual} Let $A \simeq kQ_A/I_A$ be a finite dimensional $k$-algebra over an algebraically closed field. Assume that there is an irreducible epimorphism from $I_b$ to $I_a$. Then the morphism $\varphi_a:P_a \rightarrow I_a$  factors through $I_b$.
%\end{lema}

Next, we state the main result of this section.

\begin{teo}\label{teoirred}
Let $A \simeq kQ_A/I_A$ be a finite dimensional $k$-algebra over an algebraically closed field.  Assume that $A$ is a representation-finite algebra. Let $a$ and $b$ be vertices in  $Q_A$ such that there is an arrow from $a$ to $b$ in $Q_{A}$.
\begin{enumerate}
\item[(a)] If there is an irreducible morphism from $P_b$ to $P_a$ and any non-zero morphism $\eta: P_b \rightarrow P_a$ is such that $\eta= \mu f_1$ where $\mu: P_a \rightsquigarrow P_a$ is a non-zero morphism and $ f_1$ is an irreducible morphism from $P_b$ to $P_a$  then $r_b \leq r_a$.
\item[(b)] If there is an irreducible morphism from $I_b$ to $I_a$  and  any non-zero morphism $\eta: I_b \rightarrow I_a$ is such that $\eta= g_1\mu$ where $\mu: I_b \rightsquigarrow I_b$ is a non-zero morphism  and $g_1$ is an irreducible morphism from $I_b$ to $I_a$ then $r_a \leq r_b$.
\end{enumerate}
\end{teo}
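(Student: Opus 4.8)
The plan is to prove the two statements by a single symmetric argument, reducing each inequality to controlling the radical--length of one auxiliary morphism between a projective and an injective. I describe part (a); part (b) is obtained by dualizing, interchanging the roles of projectives and injectives and of Lemma \ref{refe} with Lemma \ref{refe-dual}. Throughout I use that, since $A$ is representation--finite, $\Gamma_A$ is a standard component with trivial valuation, so the length machinery of Definition \ref{largo} and Theorem \ref{explotan} applies; I may also assume $a\neq b$, the case $a=b$ giving $r_a=r_b$ trivially.

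First I would produce the auxiliary morphism. Since there is an arrow $a\to b$, we have $[P_a:S_b]\geq 1$, so by Lemma \ref{multiplicity} $\mathrm{Hom}_A(P_a,I_b)\neq 0$. Let $w_b\colon P_b\to I_b$ be a non-zero morphism factoring through $S_b$; by Lemma \ref{proj-inj}(a) at $b$ it has length $r_b$, that is $w_b\in\Re^{r_b}(P_b,I_b)\setminus\Re^{r_b+1}(P_b,I_b)$. As $f_1\colon P_b\to P_a$ is an irreducible morphism between indecomposable projectives it is a monomorphism, and since $I_b$ is injective, $w_b$ extends along $f_1$ to a morphism $g\colon P_a\to I_b$ with $g f_1=w_b$; note $g\neq 0$ because $gf_1=w_b\neq 0$. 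As $a\neq b$ we have $\mathrm{Hom}_A(S_a,I_b)=0$, so $g$ does not factor through $S_a$; hence Lemma \ref{refe} provides a non-isomorphism $\psi\colon I_b\to I_a$ with $\psi g\neq 0$ factoring through $S_a$, and Lemma \ref{proj-inj}(a) at $a$ shows $\psi g$ has length $r_a$.

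The two relations $gf_1=w_b$ and $\psi g$ then deliver the inequality, once the length of $g$ is pinned down. Super-additivity of lengths under composition ($\Re^m\Re^n\subseteq\Re^{m+n}$) gives $\mathrm{length}(g)\leq r_b-1$, and since $\psi$ is a non-isomorphism, $r_a=\mathrm{length}(\psi g)\geq 1+\mathrm{length}(g)$. Thus it suffices to prove the reverse estimate $\mathrm{length}(g)\geq r_b-1$, i.e. $g\in\Re^{r_b-1}(P_a,I_b)$; combined with $r_a\geq 1+\mathrm{length}(g)$ this yields $r_a\geq 1+(r_b-1)=r_b$, as desired.

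Establishing $g\in\Re^{r_b-1}(P_a,I_b)$ is the main obstacle, and it is exactly where the factorization hypothesis is used. If instead $g\in\Re^{d}\setminus\Re^{d+1}$ with $d\leq r_b-2$, then precomposing the length-$d$ paths realizing $g$ with the single irreducible $f_1$ produces a non-zero composite of $d+1$ irreducible morphisms $P_b\rightsquigarrow I_b$ lying in $\Re^{d+2}=\Re^{(d+1)+1}$, a genuine degree jump. By Theorem \ref{explotan} such a jump forces a path of irreducible morphisms $P_b=X_1\to\cdots\to X_{d+2}=I_b$ with zero composite carrying a non-zero parallel morphism $\epsilon$. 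The plan is to contradict this using the hypothesis that every non-zero morphism $P_b\to P_a$ has the form $\mu f_1$ with $\mu\colon P_a\rightsquigarrow P_a$: this forces $\dim_k\mathrm{Irr}(P_b,P_a)=1$ and leaves no second, parallel path issuing from $P_b$ that could support the zero-composite/non-zero-parallel pattern of Theorem \ref{explotan}, so no such jump occurs and $\mathrm{length}(g)=r_b-1$. I expect this translation of the factorization hypothesis into the absence of a left--degree jump for $f_1$ to be the technical heart of the proof. Part (b) is then obtained dually: one lifts the canonical $w_a\colon P_a\to I_a$ (of length $r_a$) along the irreducible epimorphism $g_1\colon I_b\to I_a$ using projectivity of $P_a$, obtaining $g\colon P_a\to I_b$ with $g_1 g=w_a$, applies Lemma \ref{refe-dual} to get $\varphi\colon P_b\to P_a$ with $g\varphi$ of length $r_b$, and concludes $r_b=\mathrm{length}(g\varphi)\geq 1+\mathrm{length}(g)\geq 1+(r_a-1)=r_a$.
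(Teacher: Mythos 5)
Your overall architecture matches the paper's: produce a morphism $P_a\to I_b$ whose depth is exactly $r_b-1$, then push it into $I_a$ via Lemma \ref{refe} and compare with Lemma \ref{proj-inj}. Your construction of the auxiliary morphism (extending $w_b$ along the monomorphism $f_1$ by injectivity of $I_b$) is a legitimate variant of the paper's use of Lemma \ref{refe-dual}, and your upper bound $\mathrm{length}(g)\le r_b-1$ and the final step $r_a\ge 1+\mathrm{length}(g)$ are correct. The problem is that the step you yourself identify as ``the technical heart'' --- showing $g\in\Re^{r_b-1}(P_a,I_b)$ --- is left as a plan, and the plan has two genuine gaps.

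First, to invoke Theorem \ref{explotan} you need its hypothesis (a): a \emph{single} composite of $d+1$ irreducible morphisms that is non-zero and lies in $\Re^{d+2}$. Your $g$ is not a path of irreducible morphisms but an extension of $w_b$, hence only a sum $\sum_j c_jp_j$ of such paths; from $gf_1\in\Re^{d+2}$ you cannot conclude that some individual $p_jf_1$ with $\ell(p_j)=d$ is both non-zero and in $\Re^{d+2}$ (the length-$(d+1)$ summands may cancel against each other, or vanish individually while longer summands carry the non-vanishing of $gf_1$). Second, even granting Theorem \ref{explotan}(b), your proposed contradiction --- that $\dim_k\mathrm{Irr}(P_b,P_a)=1$ ``leaves no second parallel path issuing from $P_b$'' --- misreads the conclusion: the non-zero morphism $\epsilon=\epsilon_n\cdots\epsilon_0$ may have its deviation $\epsilon_i\in\Re^2(X_i,X_{i+1})$ at \emph{any} position $i$, and your hypothesis only controls morphisms $P_b\to P_a$, i.e.\ the case $i=0$. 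The paper closes both gaps at once by taking the auxiliary morphism $\phi:P_a\to I_b$ to be a path of irreducible morphisms of \emph{maximal} length $n$: then $\phi\varphi$ is itself a composite of irreducible morphisms (so Theorem \ref{explotan}(a) applies directly), a deviation at any $i\ge 1$ yields a non-zero morphism $P_a\to I_b$ in $\Re^{n+1}$ contradicting maximality, and the remaining case $\epsilon_0\in\Re^2(P_b,P_a)$ is exactly where the factorization hypothesis $\epsilon_0=\nu g$ is used, again against maximality. Without some substitute for that maximality your argument does not close; with it, you are essentially reproducing the paper's proof.
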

\begin{proof}
(a) Let $\phi\in \mbox{Hom}_A(P_a,I_b)$ be a non-zero path of irreducible morphisms between indecomposable $A$-modules of maximal length. Assume that $\ell(\phi)=n$, that is, $\phi\in \Re^n(P_a,I_b)\backslash \Re^{n+1}(P_a,I_b)$. Since $\phi$ does not factor through the simple $S_b$ by Lemma \ref{refe} there exists a non-isomorphism $\varphi: P_b\rightarrow P_a$ such that $\phi\varphi$ is non-zero and factors through $S_b$. 

We claim that $\varphi$ is an irreducible morphism.  In fact, by hyphotesis $\varphi=\mu f$ with $f$ an irreducible morphism from $P_b$ to $P_a$ and $\mu$ a non-zero morphism from $P_a$ to $P_a$. If $\mu$ is non-trivial,  then
we get a non-zero morphism $\phi\mu$ from $P_a$ to $I_b$. Moreover  since $A$ is representation-finite, then  there exists a non-zero path of irreducible morphism between indecomposable $A$-modules $\phi\mu':P_a\rightarrow I_b\in \Re^{n+1}(P_a, I_b)$ which is a contradiction to the maximality of the length of $\phi$.  Therefore, $\varphi: P_b\rightarrow P_a$ is an irreducible morphism. 

Thus we obtain a non-zero path  $\phi \varphi:P_b \rightarrow I_b$ that factors through $S_b$, where $\varphi:P_b\rightarrow P_a$ is an irreducible morphism. By Lemma \ref{proj-inj} we have that $\phi \varphi\in \Re^{r_b}(P_b,I_b)\backslash \Re^{r_b+1}(P_b,I_b)$.  

We affirm that $\phi\in \Re^{r_b-1}(P_a,I_b) $.    Indeed, assume that  $n<r_b-1$ and consider \[\phi:X_1 \stackrel{h_1} \rightarrow  X_2  \stackrel{h_2}  \rightarrow  \dots \stackrel{h_{n-1}} \rightarrow X_{n-1} \stackrel{h_{n}} \rightarrow  X_{n+1}\]  where $X_1=P_a$ and $X_{n+1}=I_b$. Then by  Theorem \ref{explotan} we know that there is a  path $P_b=X_0\stackrel{f_0}\rightarrow P_a=X_1 \stackrel{f_1}\rightarrow X_2 \stackrel{f_2}\rightarrow X_3 \rightarrow \dots \rightarrow X_{n-1} \stackrel{f_n}\rightarrow X_{n+1}=I_b$ of irreducible morphisms with zero composition and a morphism $\epsilon= \epsilon_{_{n}} ... \epsilon_{_0} \neq 0$ where $\epsilon_{_i} = f_{_i}$  or $\epsilon{_i} \in \Re^{2}(X_i, X_{i+1})$. Observe that at least for one $i$ we have that $\epsilon{_i} \in \Re^{2}(X_i, X_{i+1})$, otherwise $\epsilon=0$. 

If for any $1\leq i \leq n$ we get that $\epsilon_i\in \Re^2(X_i,X_{i+1})$ then we obtain a path in $\Re^{n+1}(P_a,I_b)$ which is a contradiction to the maximality of $\phi$. Then $\epsilon_0\in \Re^{2}(P_a,P_b)$. By hypothesis, $\epsilon_0=\nu g$ where $\nu:P_a\rightarrow P_a$ is a non-zero morphism and $g$ is an irreducible morphism from $P_b$ to $P_a$. 

Again we obtain a non-zero morphism $\epsilon_{_{n}} ... \epsilon_{_{1}}\nu\in \Re^{n+1}(P_a, I_b)$ which contradicts the maximality of $\phi$. 
Hence $\phi\in \Re^{r_{b}-1}(P_a, I_b) \backslash \Re^{r_{b}}(P_a, I_b)$. 

By Lemma \ref{refe}, since  $\phi$ does not factor
through the simple $S_a$,  then there exists a non isomorphism $\phi': I_b \longrightarrow I_a$ such that $\phi' \phi$ is
non-zero and factors through $S_a$.  We illustrate the situation as follows:  
\begin{displaymath}
\xymatrix { & P_a \ar[rrrd]^{\phi}\ar[rr] && S_a \ar[rrrd] & & &\\
P_b  \ar[ru]^{\varphi} \ar[rr] && S_b \ar[rr]&& I_b  \ar[rr]_{\phi'}& & I_a \\}
 \end{displaymath}
Since $\phi'$ is a non-zero morphism we get that $\phi' \phi\in \Re^{r_b}(P_a, I_a)$. On the other hand, since $\phi' \phi$ factors through $S_a$ by Lemma \ref{proj-inj} we have that $\phi' \phi\in \Re^{r_a}(P_a, I_a)\backslash \Re^{r_a+1}(P_a, I_a)$. Therefore, $r_b \leq r_a$ proving Statement (a).

\vspace{.1in}

(b) The proof of (b) follows with dual arguments than the ones used in Statement (a). 
\end{proof}

 Now, we show an example where applying Theorem \ref{teoirred} we determine the nilpotency index of the radical of the module category of an algebra which is not string and such that the Auslander-Reiten quiver is a component without length. 

\begin{ej}
   \emph{ Consider $A=kQ_A/I_A$ the algebra given by the quiver 
   \[\xymatrix{1\ar@/^/[r]^{\alpha} &
2\ar@/^/[l]^{\beta}\ar[r]& 3} \] 
with $I_A=<\alpha\beta\alpha>$. The Auslander-Reiten quiver of $\mbox{mod}\,A$ is as follows: 
\[\xymatrix @R=0.7cm  @C=0.5cm{
&N\ar[dr]\ar@{.}[rr]&&\tau^{-1}N\ar[dr]\ar@{.}[rr]&&\tau^{-2}N\ar[dr]\ar@{.}[rr]&&I_3\ar[dr]&&\\
M\ar[dr]\ar[ur]\ar[r]& L\ar[r] &\tau^{-1}M\ar[dr]\ar[ur]\ar[r]& \tau^{-1}L\ar[r] & \tau^{-2}M\ar[dr]\ar[ur]\ar[r]&\tau^{-2}L\ar[r]& \tau^{-3}M\ar[dr]\ar[ur]\ar[r]&I_1\ar[r]& I_2\ar[dr]&\\
&P_1\ar[dr]\ar[ur]\ar@{.}[rr]&& \tau^{-1}P_1\ar[dr]\ar[ur]\ar@{.}[rr]&&\tau^{-2}P_1\ar[dr]\ar[ur]\ar@{.}[rr]&& \tau^{-3}P_1\ar[dr]\ar[ur]\ar@{.}[rr]&& N\\
&&P_2\ar[dr]\ar[ur]\ar@{.}[rr]&&\tau^{-1} P_2\ar[dr]\ar[ur]\ar@{.}[rr]&& \tau^{-2} P_2\ar[dr]\ar[ur]\ar@{.}[rr]&& M \ar[ur] \ar[r]\ar[dr]& P_1\\
&P_3\ar[ur]\ar@{.}[rr]&& \tau^{-1}P_3\ar[ur]\ar@{.}[rr]&&S_2\ar[ur]\ar@{.}[rr]&& S_1\ar[ur]\ar@{.}[rr]&& L}\]}
\vspace{.1in}

\noindent \emph{where we identify the $A$-modules $M, N, L$ and $P_1$.}

\emph{We observe that every non-zero morphism $\nu:P_1\rightarrow P_2$ factors through an irreducible morphism from $P_1$ to $P_2$. Then applying Theorem \ref{teoirred} we get that $r_1\leq r_2$.}

\emph{Similarly, since every non-zero morphism $\nu':I_1\rightarrow I_2 $ factors  through an irreducible morphism from $I_1$ to $I_2$ applying again  Theorem \ref{teoirred} we get that $r_2\leq r_1$. Hence $r_1=r_2$.} 

\emph{Moreover, in this case $r_1=r_2=14$. By Theorem \ref{nilpopf} we get that the nilpotency index of the module category of $A$ is equal to fifteen. }

\emph{Finally, we observe that $\mbox{dim}_k\mbox{End}_A(P_1)=2$ and $\mbox{dim}_k\mbox{End}_A(I_2)=2$.}
\end{ej}

As a consequence of Theorem \ref{teoirred} we obtain the following useful corollary.

\begin{coro}\label{irred}
Let $A \simeq kQ_A/I_A$ be a finite-dimensional $k$-algebra over an algebraically closed field.  Assume that $A$ is a representation-finite algebra. Let $a$ and $b \in (Q_A)_0$ such that there is an arrow from $a$ to $b$ in $Q_{A}$.
\begin{enumerate}
\item[(a)] If there is an irreducible morphism from $P_b$ to $P_a$ such that   $\emph{End} (P_b)\simeq k$ then $r_b \leq r_a$.
\item[(b)] If there is an irreducible morphism from $I_b$ to $I_a$  such that  $\emph{End} (I_a)\simeq k$ then $r_a \leq r_b$.
\end{enumerate}
\end{coro}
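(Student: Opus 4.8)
The plan is to derive Corollary \ref{irred} directly from Theorem \ref{teoirred} by showing that the endomorphism hypotheses $\mbox{End}(P_b)\simeq k$ and $\mbox{End}(I_a)\simeq k$ are sufficient to guarantee the factorization conditions required in parts (a) and (b) of the theorem. The key observation is that the theorem asks for every non-zero morphism $\eta:P_b\rightarrow P_a$ to factor as $\eta=\mu f_1$ with $\mu$ a non-zero endomorphism of $P_a$ and $f_1$ an irreducible morphism; when $\mbox{End}(P_b)\simeq k$ this factorization can be produced on the other side, through $P_b$ rather than $P_a$, and I would argue that the two phrasings coincide in this setting.

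For part (a), I would first fix an irreducible morphism $f_1:P_b\rightarrow P_a$, which exists by hypothesis. Since $A$ is representation-finite, the component of $\Gamma_A$ is standard, so by the mesh-category description every morphism in $\mbox{Hom}_A(P_b,P_a)$ is a $k$-linear combination of compositions of irreducible morphisms along paths in $\Gamma_A$ from $P_b$ to $P_a$. Because $\mbox{dim}_k\mbox{Irr}(P_b,P_a)=1$ under the trivial valuation of a representation-finite algebra, by the consequence recorded in \ref{rad} any irreducible morphism $P_b\rightarrow P_a$ is $\alpha f_1$ modulo $\Re^2$. The role of $\mbox{End}(P_b)\simeq k$ is that any nontrivial prefix of a factorization of $\eta$ starting at $P_b$ must be a scalar, so it cannot absorb the irreducible step; this forces the irreducible morphism $f_1$ out of $P_b$ first, and the remaining factor is then a non-zero endomorphism $\mu\in\mbox{End}(P_a)$ composed after $f_1$. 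Thus every non-zero $\eta:P_b\rightarrow P_a$ has the form $\eta=\mu f_1$ demanded by Theorem \ref{teoirred}(a), and the conclusion $r_b\leq r_a$ follows immediately. Part (b) is handled by the dual argument, using $\mbox{End}(I_a)\simeq k$ and the dual statement Lemma \ref{refe-dual}, so that any non-zero $\eta:I_b\rightarrow I_a$ factors as $\eta=g_1\mu$ with $\mu$ a non-zero endomorphism of $I_b$; Theorem \ref{teoirred}(b) then gives $r_a\leq r_b$.

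The main obstacle I anticipate is making the factorization step fully rigorous, namely showing that the condition $\mbox{End}(P_b)\simeq k$ really prevents any nontrivial endomorphism of $P_b$ from being inserted before the irreducible morphism, so that the hypothesis of Theorem \ref{teoirred}(a) is genuinely satisfied rather than merely plausible. Concretely, I must verify that $\mbox{End}(P_b)\simeq k$ forces $\Re(\mbox{End}(P_b))=0$, and then use standardness to conclude that any path of irreducible morphisms $P_b\rightsquigarrow P_a$ factors through the irreducible morphism $f_1$ with the trailing part landing inside $\mbox{End}(P_a)$. If a nontrivial endomorphism of $P_b$ could appear first, it would lie in $\Re(\mbox{End}(P_b))=0$, hence vanish, which is the contradiction that pins the factorization to the required form. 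Once this is established, both statements reduce cleanly to Theorem \ref{teoirred} and no further computation is needed.
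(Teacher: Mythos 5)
Your overall strategy --- reduce to Theorem \ref{teoirred} by showing that $\mathrm{End}(P_b)\simeq k$ forces every non-zero morphism $P_b\to P_a$ to factor as $\mu f_1$ with $f_1$ irreducible --- is exactly the paper's strategy, but the step in which you verify that factorization (which is the entire content of the corollary) does not work as sketched. You argue inside the Auslander--Reiten quiver: by standardness a morphism in $\Re^2(P_b,P_a)$ is a combination of compositions of irreducible morphisms along paths $P_b\rightsquigarrow P_a$ in $\Gamma_A$, and you claim that $\mathrm{End}(P_b)\simeq k$ (equivalently $\Re(\mathrm{End}(P_b))=0$) forces the irreducible morphism $f_1:P_b\to P_a$ to occur as the first step of any such path. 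That is a non sequitur: the first irreducible morphism of such a path is some $P_b\to X$ with $X$ an arbitrary indecomposable summand of the middle term of the minimal left almost split morphism out of $P_b$; it is not an endomorphism of $P_b$, so the hypothesis on $\mathrm{End}(P_b)$ says nothing about it, and nothing in your argument rules out a path $P_b\to X\to\cdots\to P_a$ with $X\not\simeq P_a$ whose composition does not factor through $f_1$. The fact that $\dim_k\mathrm{Irr}(P_b,P_a)=1$ only controls morphisms modulo $\Re^2(P_b,P_a)$, so it cannot supply the factorization of morphisms lying in $\Re^2(P_b,P_a)$ either.

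The paper closes this gap by working in the ordinary quiver $Q_A$ rather than in $\Gamma_A$: $\mathrm{Hom}_A(P_b,P_a)$ has a basis given by the non-zero residue classes of paths from $a$ to $b$ in $Q_A$ modulo $I_A$; the arrow $\alpha:a\to b$ yields an irreducible monomorphism $f:P_b\to P_a$, and, using $\mathrm{End}_A(P_b)\simeq k$, every other such path is of the form $\alpha\rho$ with $\rho$ a cycle at $a$, so every non-zero morphism $P_b\to P_a$ equals $\mu f$ with $\mu:P_a\rightsquigarrow P_a$ non-zero, which is precisely the hypothesis of Theorem \ref{teoirred}(a). This combinatorial description of $\mathrm{Hom}_A(P_b,P_a)$ by paths in the ordinary quiver is the ingredient missing from your write-up; standardness of $\Gamma_A$ and triviality of the valuation alone do not yield it. The same remark applies to your dual treatment of part (b).
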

\begin{proof}
We only state the proof of Statement (a) since Statement (b) follows with dual arguments.

Let $\varphi\in \mbox{Hom}_A(P_b, P_a)$. Since $\mbox{Hom}_A(P_b, P_a)$ has a basis consisting of all paths from $a$ to $b$ in $Q_A$ modulo the ideal $I_A$ and  $\mbox{End}_{A}(P_b) \simeq k$ we have that there is a cycle from $a$ to $a$ in $Q_A$. Moreover, for each path $\delta$ from $a$ to $b$ we have a morphism $g_{_{\delta}}:P_b\rightarrow P_a$ given by $g_{_{\delta}}(\lambda)=\lambda\delta$, where $\lambda$ is a path starting in $b$. In particular, the arrow $\alpha$ from $a$ to $b$ gives rise to an irreducible monomorphism $f$  from $P_b$ to $P_a$. Since any other path from $a$ to $b$ in $Q_A$ is of the form $ \alpha\rho$, with $\rho: a  \rightsquigarrow a$, we obtain that any morphism from $P_b$ to $P_a$ factors thought the irreducible morphism $f$. Therefore, for any non-zero morphism
$\varphi:P_b\rightarrow P_a$ we get that $\varphi=\mu f$ with $f$ an irreducible morphism from $P_b$ to $P_a$ and $\mu$ a non-zero morphism from $P_a$ to $P_a$. The result follows from Theorem \ref{teoirred}. 
\end{proof}

Next, we show an example of a representation-finite algebra $A$ which is not a monomial algebra neither a toupie algebra. We show how Corollary \ref{irred} allows us to reduce the number of paths that we have to analyze to compute the nilpotency index of the radical of $\mbox{mod} \,A$.

\begin{ej}\label{example ra=rb}
\emph{Consider $A=kQ_A/I_A$ the algebra given by the quiver}
\[\xymatrix{
&1\ar[dl]_{\alpha_1}\ar[dr]^{\beta_1}&&&&\\
2\ar[ddr]_{\alpha_2}&&4\ar[d]^{\beta_2}\ar[r]^{\mu_1}&8\ar[r]^{\mu_2}&9\ar[r]^{\mu_3}&10\\
&&5\ar[dl]^{\beta_3}&&&\\
&3\ar[r]_{\gamma_1}&6\ar[r]_{\gamma_2}&7&&&
}\]
\emph{with $I_A=<\alpha_2\alpha_1-\beta_3\beta_2\beta_1, \mu_3\mu_2, \gamma_2\gamma_1>$. 
It follows from Theorem \ref{nilpopf} that the vertices $1$, $7$ and $10$ do not have to be considered in order to study the nilpotency index of the radical of $\mbox{mod} \,A$.}

\emph{On the other hand, note that $\mbox{dim}_k\mbox{Hom}_A(P_i,P_j)\leq 1$ and $\mbox{dim}_k\mbox{Hom}_A(I_i,P_j)\leq 1$ for all $i,j\in (Q_A)_0$. Then applying Corollary \ref{irred} we get that: 
%\begin{itemize}
%    \item Since there is an irreducible morphism $I_9\rightarrow I_8$ then $r_8\leq r_9$. 
%    \item Since there are irreducible morphisms $P_8\rightarrow P_4$ and $I_8\rightarrow I_4$, then  $r_8=r_4$.  
%    \item Since there are irreducible morphisms $P_5\rightarrow P_4$ and $I_5\rightarrow I_4$, then $r_5=r_4$.
%    \item Since there is an irreducible morphism $P_3\rightarrow P_5$, then $r_3\leq r_5$. 
%    \item Since there is an irreducible morphism $I_6\rightarrow I_3$, then $r_3\leq r_6$. 
%    \item Since there is an irreducible morphism $P_3\rightarrow P_2$, then $r_3\leq r_2$. 
%\end{itemize}
\begin{itemize}
    \item Since there are irreducible morphisms $I_9\rightarrow I_8$ and from $I_6\rightarrow I_3$ then $r_8\leq r_9$ and $r_3\leq r_6$. 
    \item Since there are irreducible morphisms $P_8\rightarrow P_4$ and $I_8\rightarrow I_4$, then  $r_8=r_4$.  
    \item Since there are irreducible morphisms $P_5\rightarrow P_4$ and $I_5\rightarrow I_4$, then $r_5=r_4$.
    \item Since there are irreducible morphisms $P_3\rightarrow P_5$ and from $P_3\rightarrow P_2$, then $r_3\leq r_5$ and $r_3\leq r_2$. 
%    \item Since there is an irreducible morphism $I_6\rightarrow I_3$, then $r_3\leq r_6$. 
%    \item Since there is an irreducible morphism $P_3\rightarrow P_2$, then $r_3\leq r_2$. 
\end{itemize}
Hence, to compute the nilpotency index  of the radical of $\mbox{mod}\,A$ we only have to study the vertices $2, 9$ and $6$. Computing the respectively paths from the projective to the injective going through the simple in such vertices we obtain that $r_2=27$, $r_9=27$ and $r_4=26$. Therefore, $r_A= \mbox{max} \{r_{i} +1\}_{i=2, 4,9}=28$.}
\end{ej}

\section{Application to monomial algebras}

Consider $A$ a representation-finite algebra. The aim of this section is to extend \cite[Theorem 2.5]{CGS} to any monomial algebra, that is, monomial algebras whose Auslander-Reiten quiver may be a component without length.
\vspace{.in}

We start this section recalling the following definition.

\begin{defi}\label{involved}
Let $A \simeq kQ_A/I_A$.  We say that a vertex $x$ in $Q_A$ is involved in a zero-relation $\alpha_m \dots \alpha_1 \in I_A$ if $x=s(\alpha_i)$ for some $i=2, \dots, m$, where $s(\alpha_i)$ denotes the vertex which is the starting of the arrow $\alpha_i$.
\end{defi}

We denote by $(R_A)_0$ the set of all the vertices $u \in (Q_A)_0$ involved in zero-relations of $I_A$. 

In the next result we show that if there is a vertex $b$ in $Q_A$ not involved in a zero-relation of $I_A$,  then  $\mbox{dim}_{k}(\mbox{End}_{A}(P_b))=\mbox{dim}_{k}(\mbox{End}_{A}(I_b))  =1$.

\begin{lema} \label{vertice no involved} Let $A \simeq kQ_A/I_A$ be a finite dimensional monomial $k$-algebra over an algebraically closed field. Let $b \in (Q_A)_{0} \backslash (R_A)_0$. Then $\emph{dim}_{k}(\emph{End}_{A}(P_b))=\emph{dim}_{k}(\emph{End}_{A}(I_b)) =1$.
\end{lema}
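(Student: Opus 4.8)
The plan is to reduce both equalities to a single combinatorial statement about cycles in $Q_A$ and then transfer from $P_b$ to $I_b$ by duality. Recall, as used in the proof of Corollary \ref{irred}, that $\mbox{End}_A(P_b)\cong e_bAe_b$ has a $k$-basis given by the paths from $b$ to $b$ in $Q_A$ that are nonzero modulo $I_A$, and among these the trivial path $e_b$ always occurs. Hence $\dim_k(\mbox{End}_A(P_b))=1$ is equivalent to the assertion that $b$ admits no nontrivial cycle which is nonzero in $A$. So I would argue by contradiction: suppose $c:b\rightsquigarrow b$ is a path of length $\ell\geq 1$ with $c\notin I_A$.

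Since $A$ is finite dimensional and $I_A$ admissible, some power of $c$ lies in $I_A$; let $n\geq 2$ be minimal with $c^n\in I_A$ (minimality forces $n\geq 2$ because $c\notin I_A$). Here I would invoke the defining feature of a monomial algebra: a path is zero in $A$ if and only if it contains one of the monomial generators of $I_A$ as a subpath. Thus $c^n$ contains a generator $\rho=\alpha_m\cdots\alpha_1\in I_A$, with $m\geq 2$, occupying say the arrow positions $p+1,\dots,p+m$ of $c^n$, while by minimality $c^{n-1}\notin I_A$.

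The key step, and the main obstacle, is to locate $\rho$ inside $c^n$. Writing the vertices of $c^n$ as $v_0,v_1,\dots,v_{n\ell}$, one has $v_{j\ell}=b$ for $j=0,\dots,n$. Since $c^{n-1}$ occurs in $c^n$ both as the prefix (positions $0$ to $(n-1)\ell$) and as the suffix (positions $\ell$ to $n\ell$), and neither copy, being nonzero, can contain $\rho$, minimality forces $p<\ell$ and $p+m>(n-1)\ell$. As $n\geq 2$ this yields $p<\ell<p+m$, so the vertex $v_\ell=b$ is a strictly interior vertex of $\rho$; concretely $b=s(\alpha_i)$ with $i=\ell-p+1\in\{2,\dots,m\}$. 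Therefore $b$ is involved in the zero-relation $\rho$, contradicting $b\in (Q_A)_0\setminus(R_A)_0$. This proves $\dim_k(\mbox{End}_A(P_b))=1$.

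Finally, for $I_b$ I would use the standard duality $D=\mbox{Hom}_k(-,k):\mbox{mod}\,A\to\mbox{mod}\,A^{op}$, a contravariant equivalence carrying $I_b$ to the indecomposable projective $A^{op}$-module at $b$, so that $\dim_k(\mbox{End}_A(I_b))=\dim_k(\mbox{End}_{A^{op}}(P_b))$. Now $A^{op}\simeq kQ_A^{op}/I_A^{op}$ is again a finite dimensional monomial algebra, its zero-relations being the formal reverses of those of $A$; hence a vertex is interior to a zero-relation of $A^{op}$ exactly when it is interior to one of $A$, and so $b\notin(R_{A^{op}})_0$. Applying the case already established to $A^{op}$ gives $\dim_k(\mbox{End}_{A^{op}}(P_b))=1$, and the result for $I_b$ follows.
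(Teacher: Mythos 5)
Your proof is correct and follows essentially the same route as the paper: reduce $\dim_k(\mbox{End}_A(P_b))\geq 2$ to the existence of a nonzero nontrivial cycle $c$ at $b$, derive a contradiction from finite-dimensionality together with $b\notin (R_A)_0$, and transfer the statement to $I_b$ via $\mbox{End}_A(I_b)\cong \mbox{End}_A(P_b)$ (which your passage to $A^{op}$ makes explicit). The only difference is that you supply the combinatorial justification --- a monomial generator occurring in a power of $c$ must straddle a copy of the vertex $b$, forcing $b$ to be interior to that zero-relation --- for the step the paper simply asserts as ``$\varrho^{n}\neq 0$ for all $n$ since $b$ is not involved in a zero-relation.''
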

\begin{proof} Assume that $\mbox{dim}_{k}(\mbox{End}_{A}(P_b)) \geq 2$. Then the representation of the projective corresponding to  the vertex $b$ is such that in $(P_{b})_{b}$ there are at least two linearly independent paths starting and ending in $b$.  Indeed, one of this paths is $e_{b}$ and the other is a non-zero cycle $\varrho: b \rightarrow b$  in $Q_A$. Since by hypothesis $b$ is not involved in a zero-relation then $\varrho^{n} \neq 0$ for all positive integer $n$, getting a contradiction to the fact that $A$ is a finite dimensional $k$-algebra. Therefore, if $b \notin (R_A)_0$ then $\mbox{dim}_{k}(\mbox{End}_{A}(P_b))=1$. The result follows since  $\mbox{End}_{A}(P_b)\simeq \mbox{End}_{A}(I_b)$. 
\end{proof}

Now, we are in position to prove the following result.

\begin{prop} \label{items-1} Let $A \simeq kQ_A/I_A$ be a finite dimensional monomial $k$-algebra over an algebraically closed field. Assume that $A$ is  representation-finite. Let $a$ and $b \in (Q_A)_{0}$ such that there is an arrow $\alpha$ from $a$ to $b$ in $Q_{A}$. Then the following conditions hold.
\begin{enumerate}
\item[(a)] If $a \in (R_A)_{0}$ and  $b \notin (R_A)_0$ then $r_{b} \leq r_{a}$.
\item[(b)] If $a \notin  (R_A)_0$ and  $b \in (R_A)_0$ then $r_{a} \leq r_{b}$.
\item[(c)] If $a \notin (R_A)_0$ and  $b \notin  (R_A)_0$ then $r_{a} = r_{b}$.
\end{enumerate}
\end{prop}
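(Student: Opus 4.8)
The plan is to reduce each of the three statements to a direct application of Corollary \ref{irred}, whose hypotheses are supplied by Lemma \ref{vertice no involved} together with the observation that the arrow $\alpha$ from $a$ to $b$ produces irreducible morphisms between the relevant projective (respectively injective) modules. Lemma \ref{vertice no involved} tells us precisely that a vertex outside $(R_A)_0$ has projective and injective with endomorphism algebra isomorphic to $k$, which is exactly the condition required to invoke Corollary \ref{irred}. Thus the whole argument splits into (i) manufacturing the irreducible morphisms from the arrow under the non-involvement hypotheses, and (ii) bookkeeping the three combinations of the hypotheses on $a$ and $b$.

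First I would record the following structural fact for a monomial algebra: since the relations of $I_A$ are single paths, the radical of $P_a$ decomposes as a direct sum $\mbox{rad}\,P_a = \bigoplus_{\alpha\colon a \to c} R_\alpha$ indexed by the arrows with source $a$, where $R_\alpha$ is the submodule of $P_a$ generated by $\alpha$. Each $R_\alpha$ is a quotient of $P_c$, and it is isomorphic to $P_c$ precisely when $\alpha$ does not begin any zero-relation; for if $\alpha$ began a zero-relation $\alpha_m \dots \alpha_1$ with $\alpha_1 = \alpha$, then $c = s(\alpha_2) \in (R_A)_0$. Hence, whenever $c \notin (R_A)_0$ we obtain $R_\alpha \simeq P_c$ as a direct summand of $\mbox{rad}\,P_a$, and therefore an irreducible monomorphism $P_c \to P_a$. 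Applying this to our arrow, if $b \notin (R_A)_0$ there is an irreducible morphism $P_b \to P_a$; dually, working in $A^{\mathrm{op}}$ (for which the set of vertices involved in zero-relations coincides with $(R_A)_0$, since reversing a relation preserves its interior vertices), if $a \notin (R_A)_0$ there is an irreducible morphism $I_b \to I_a$.

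With these morphisms in hand the three statements follow at once. For (a), since $b \notin (R_A)_0$, Lemma \ref{vertice no involved} gives $\mbox{End}(P_b) \simeq k$, and combined with the irreducible morphism $P_b \to P_a$, Corollary \ref{irred}(a) yields $r_b \leq r_a$. For (b), since $a \notin (R_A)_0$ we have $\mbox{End}(I_a) \simeq k$ and an irreducible morphism $I_b \to I_a$, so Corollary \ref{irred}(b) gives $r_a \leq r_b$. For (c), both conditions hold, so applying (a) and (b) simultaneously gives $r_b \leq r_a$ and $r_a \leq r_b$, whence $r_a = r_b$. The only delicate point is step (i): verifying that the arrow genuinely yields an irreducible morphism between the indecomposable projectives (respectively injectives) under the stated non-involvement hypothesis. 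This is where monomiality is essential, as it guarantees both the direct-sum decomposition of $\mbox{rad}\,P_a$ along arrows and the clean implication that $\alpha$ beginning a relation forces its target into $(R_A)_0$; for a non-monomial algebra the radical need not split along arrows and this step fails.
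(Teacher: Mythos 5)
Your proposal is correct and follows essentially the same route as the paper: use Lemma \ref{vertice no involved} to get the one-dimensional endomorphism algebras, observe that non-involvement of the relevant vertex forces the map induced by the arrow $\alpha$ to be an irreducible morphism between the projectives (dually, the injectives), and then invoke Corollary \ref{irred}. The only difference is cosmetic: you justify the irreducibility via the direct-sum decomposition of $\mbox{rad}\,P_a$ along arrows, whereas the paper argues directly that a failure of irreducibility would produce a non-zero path $\mu$ with $\mu\alpha\in I_A$ and hence put $b$ in $(R_A)_0$ — the same underlying fact.
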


\begin{proof}
(a) Since  $b \notin (R_A)_0$ then $\mbox{dim}_{k}(\mbox{End}(P_b)) =1$. On the other hand, suppose that the morphism $P_b\rightarrow P_a$ is not an irreducible morphism, then there exists a non-zero path $\mu$ in $Q_A$ from $b$ to a vertex $x$ such that $\mu\alpha\in I$. Then $b\in (R_A)_0$ which is a contradiction. The result follows from Corollary \ref{irred} (a). 

\vspace{0.1in} 

(b) Follows with dual arguments than Statement (a).

\vspace{0.1in}

(c)  Since  $a, b \notin (R_A)_0$ then $\mbox{dim}_{k}(\mbox{End}_{A}(P_b)) =1$ and $\mbox{dim}_{k}(\mbox{End}_{A}(P_a)) =1$. Moreover, since $a$ and $b$ are not involved in a zero-relation we have that there are irreducible morphism from $P_b$ to $P_a$ and from $I_b$ to $I_a$. Then, the result follows from Corollary \ref{irred}(a) and (b). 
\end{proof}

In case that  $a,b\in (R_A)_0 $ then we have both possibilities  $r_a=r_b$, as in Example \ref{example ra=rb},  and $r_a < r_b$ as we show in the next example.

\begin{ej}
\emph{ Consider the algebra $A=kQ_A/I_A$ given by the following quiver 
\[ \xymatrix{4&1\ar[l]_{\alpha_4}\ar[rr]^{\alpha_1}&&2\ar[dl]^{\alpha_2}\\
&&3\ar[ul]^{\alpha_3}&}\]
with $I_A=<\alpha_1\alpha_3\alpha_2\alpha_1, \alpha_4\alpha_3\alpha_2>$.  
If we consider the relation $\alpha_1\alpha_3\alpha_2\alpha_1$,  for the arrow $\alpha_2$ we have to study the vertices $2$ and $3$ and we obtain $r_2=12$ and $r_3=16$.} 
\vspace{0.1in}

%\emph{ (b)  Consider the algebra $A=kQ_A/I_A$ given by the following quiver 
%\[\xymatrix{1\ar@/^/[r]^{\alpha_1}&2\ar[r]\ar@/^/[l]^{\alpha_2}&3}\]
%with $I_A=<\alpha_1\alpha_2\alpha_1>$. In this case, for the arrow $\alpha_2$ we have to study the vertices $1$ and $2$ and we obtain $r_1=r_2=14$. }
\end{ej}
As an immediate consequence of  Proposition \ref{items-1}   we get Theorem B the main result of this section. 

\begin{teo} \label{vertex-nilpo} Let $A\simeq kQ_A/I_A$ be a representation-finite monomial algebra. Consider  $(R_A)_0$ to be the set of the vertices $u\in Q_A$ involved in zero-relations of $I_A$. Then the nilpotency index of $\Re(\emph{mod}\,A)$ is determined by the length of the longest non-zero path of irreducible morphisms from the projective $P_u$ to the injective $I_u$ going through the simple $S_u$, where $u \in(R_A)_0$. Precisely,  $r_A = \emph{max}\, \{ r_{u}+1 \}_{u\in (R_A)_0}$.
\end{teo}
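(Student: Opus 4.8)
The plan is to derive Theorem \ref{vertex-nilpo} as an immediate consequence of Theorem \ref{nilpopf} together with Proposition \ref{items-1}. Theorem \ref{nilpopf} already tells us that $r_A = \max\{r_a+1\}_{a\in\mathcal V}$, where $\mathcal V$ is the set of vertices that are neither sinks nor sources. So the real content to prove is that in computing this maximum one may restrict attention from $\mathcal V$ to the smaller set $(R_A)_0$ of vertices involved in zero-relations; that is, I want to show
\[
\max\{r_a+1\}_{a\in\mathcal V} = \max\{r_u+1\}_{u\in(R_A)_0}.
\]
The inequality $\max_{u\in(R_A)_0}(r_u+1)\le \max_{a\in\mathcal V}(r_a+1)$ is trivial once I check the inclusion $(R_A)_0\subseteq\mathcal V$, which holds because a vertex involved in a zero-relation $\alpha_m\cdots\alpha_1$ is some $s(\alpha_i)$ with $2\le i\le m$, so it has an incoming arrow $\alpha_{i-1}$ and an outgoing arrow $\alpha_i$ and hence is neither a sink nor a source. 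The work is in the reverse inequality.

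\smallskip

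\textbf{Reducing to vertices in $(R_A)_0$.} Let $a\in\mathcal V$ realize the maximum, so $r_A=r_a+1$. If $a\in(R_A)_0$ there is nothing to do, so assume $a\notin(R_A)_0$. The idea is to walk from $a$ through the quiver, using Proposition \ref{items-1} at each step to move to an adjacent vertex without decreasing the relevant value $r_{(-)}$, until I reach a vertex in $(R_A)_0$. Concretely, since $a$ is not a source it has an incoming arrow, and since $a$ is not a sink it has an outgoing arrow; for any arrow $a\to b$ with both endpoints outside $(R_A)_0$ part (c) gives $r_a=r_b$, while an arrow into a vertex of $(R_A)_0$ lets me land there with $r_a\le r_b$ by part (b). The plan is to produce a finite path starting at $a$ along which the value $r_{(-)}$ is non-decreasing and which terminates at some $u\in(R_A)_0$, yielding $r_a\le r_u$ and therefore $r_A=r_a+1\le r_u+1\le\max_{u\in(R_A)_0}(r_u+1)$.

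\smallskip

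\textbf{The main obstacle} is precisely the guarantee that such a path reaches $(R_A)_0$ and terminates. If $A$ had no vertices at all in $(R_A)_0$ then $I_A$ would contain no zero-relations; but since $A$ is monomial and finite dimensional, its ideal $I_A$ is generated by paths (monomials) and is admissible, so some zero-relation must exist and $(R_A)_0\neq\emptyset$. To force termination I would argue as follows: starting at $a\notin(R_A)_0$, Lemma \ref{vertice no involved} gives $\dim_k\mathrm{End}_A(P_a)=1$, meaning there is no non-trivial cycle through $a$; thus the connected region of vertices reachable from $a$ while staying outside $(R_A)_0$ contains no cycle, so following outgoing arrows (available since no such vertex is a sink, because $a\in\mathcal V$ and part (c) propagates $r_a=r_b$ to each successor) must, after finitely many equalities $r_a=r_{b_1}=r_{b_2}=\cdots$, either hit a vertex of $(R_A)_0$ through an arrow handled by part (b), or else run into a sink lying outside $(R_A)_0$. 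The delicate point to rule out is the latter: a sink cannot be the endpoint of an edge that keeps us outside $(R_A)_0$ forever, and I would close this gap by observing that the path cannot terminate at a source/sink without first crossing a vertex in $(R_A)_0$, since otherwise the maximizing vertex $a$ would be connected only to sinks and sources, contradicting that $r_a+1$ is the genuine maximum produced by Theorem \ref{nilpopf}. Once termination at some $u\in(R_A)_0$ with $r_a\le r_u$ is secured, combining the two inequalities gives $r_A=\max\{r_u+1\}_{u\in(R_A)_0}$, as claimed.
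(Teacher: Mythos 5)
Your overall strategy is exactly the paper's: the paper derives Theorem \ref{vertex-nilpo} as an ``immediate consequence'' of Proposition \ref{items-1} combined with Theorem \ref{nilpopf}, i.e.\ one reduces the maximum over $\mathcal V$ to a maximum over $(R_A)_0$ by propagating inequalities along arrows. Your first inequality ($(R_A)_0\subseteq\mathcal V$, hence $\max_{(R_A)_0}\le\max_{\mathcal V}$) and the general plan for the reverse inequality are fine.

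However, the way you try to close ``the delicate point'' does not work. You restrict the walk to \emph{outgoing} arrows, and such a walk genuinely can get stuck at a sink outside $(R_A)_0$: take for instance $1\to 2\to 3$ together with $6\to 5\to 4\to 2$ with the zero-relation on $6\to5\to4$; here $(R_A)_0=\{5\}$, and from $a=2$ the only outgoing arrow leads to the sink $3$. Your proposed rescue --- that termination at a sink would force $a$ to be ``connected only to sinks and sources,'' contradicting maximality --- is a non sequitur: one dead-ending directed path says nothing about the other neighbours of $a$, and in any case a vertex of $\mathcal V$ adjacent only to sinks and sources can perfectly well realize the maximum in Theorem \ref{nilpopf}, so there is no contradiction to be had. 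The correct completion is to walk in the \emph{underlying undirected graph}: Proposition \ref{items-1} covers an arrow between a vertex outside $(R_A)_0$ and a neighbour in either orientation, and in every case the value $r_{(-)}$ seen from the outside vertex is $=$ (neighbour outside $(R_A)_0$, part (c)) or $\le$ (neighbour inside $(R_A)_0$, parts (a) and (b)). Since $Q_A$ is connected and $(R_A)_0\neq\emptyset$ (as you correctly note for a monomial algebra with $I_A\neq 0$), a shortest undirected path from $a$ to $(R_A)_0$ has all intermediate vertices outside $(R_A)_0$, so $r_a$ is preserved along it and can only increase at the final step, giving $r_a\le r_u$ for some $u\in(R_A)_0$. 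Note also that $r_v$ is defined for every vertex and Proposition \ref{items-1} does not require its vertices to lie in $\mathcal V$, so passing through sinks or sources along this undirected path causes no harm. With this repair your argument is the paper's.
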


 In the particular case that we deal with an algebra $A$ such that each vertex involved in a zero-relation appears in only one relation of $I_A$ and moreover it appears only once, then we get Theorem C. 
 
 \begin{teo}\label{rarb}
Let $A\simeq kQ_A/I_A$ be a representation-finite monomial algebra. Consider $(R_A)_0$ to be the set of the vertices $u\in Q_A$
involved in zero-relations of $I_A$.  If each vertex in $(R_A)_0$ belongs to only one zero-relation only once then the nilpotency index of $\Re(\emph{mod}\,A)$ is determined by the length of the longest non-zero  path of irreducible morphisms from the projective $P_u$ to the injective $I_u$, where $u \in(R_A)_0$. Furthermore, all the vertices $x$ related to a fixed zero-relation determine paths from $P_x$ to $I_x$ going through $S_x$ of the same length. Therefore,  it is enough to consider only one vertex in each zero-relation.
\end{teo}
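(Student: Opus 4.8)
The first assertion is little more than a reformulation of Theorem \ref{vertex-nilpo}: that result already gives $r_A=\max\{r_u+1\}_{u\in(R_A)_0}$, so the nilpotency index is read off from the paths $P_u\rightsquigarrow S_u\rightsquigarrow I_u$ with $u$ ranging over the vertices involved in zero-relations. The genuinely new content is the \emph{Furthermore}, and the plan is to prove that all vertices involved in one fixed zero-relation yield the same value of $r_{(-)}$. Since, by hypothesis, every vertex of $(R_A)_0$ is involved in exactly one relation, the vertex sets attached to distinct relations are pairwise disjoint and partition $(R_A)_0$; hence once the equality within each block is established we may replace $\max_{u\in(R_A)_0}r_u$ by a maximum taken over a single representative of each relation.

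Fix a zero-relation and write it as a path $a_0\xrightarrow{\alpha_1}a_1\to\cdots\xrightarrow{\alpha_m}a_m$ of $Q_A$. By Definition \ref{involved} the vertices it involves are precisely $a_1,\dots,a_{m-1}$, and two consecutive ones $a_j,a_{j+1}$ are joined by the arrow $\alpha_{j+1}:a_j\to a_{j+1}$. It is therefore enough to prove $r_{a_j}=r_{a_{j+1}}$ for each such arrow, for then chaining the equalities along the relation gives $r_{a_1}=\cdots=r_{a_{m-1}}$. For the arrow $a_j\to a_{j+1}$ the standard description of irreducible maps between indecomposable projectives, respectively injectives, over $kQ_A/I_A$ furnishes an irreducible morphism $P_{a_{j+1}}\to P_{a_j}$ and, dually, an irreducible morphism $I_{a_{j+1}}\to I_{a_j}$. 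I would then feed these into Theorem \ref{teoirred}: part (a) gives $r_{a_{j+1}}\le r_{a_j}$ and part (b) gives $r_{a_j}\le r_{a_{j+1}}$, so the two lengths coincide.

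What remains---and what I expect to be the main obstacle---is to check the factorization hypotheses of Theorem \ref{teoirred}, namely that every non-zero $\eta:P_{a_{j+1}}\to P_{a_j}$ factors as $\mu f_1$ with $f_1$ the chosen irreducible morphism and $\mu\in\mbox{End}_A(P_{a_j})$, and dually on the injective side. Here the monomial hypothesis is essential: $\mbox{Hom}_A(P_{a_{j+1}},P_{a_j})$ has a basis consisting of the paths from $a_j$ to $a_{j+1}$ modulo $I_A$, the arrow $\alpha_{j+1}$ giving $f_1$, so the hypothesis becomes the purely combinatorial assertion that each such path is $\alpha_{j+1}$ composed with a cycle at $a_j$. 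The plan is to derive this from the assumption that $a_j$ lies in a single relation and only once: an alternative path $a_j\rightsquigarrow a_{j+1}$ not of this form would, together with the relation, either exhibit $a_j$ in a second zero-relation or make the relation pass through $a_j$ twice, against the hypothesis.

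I would stress that one cannot shortcut this via Corollary \ref{irred}, because $\mbox{End}_A(P_{a_j})$ need not be isomorphic to $k$ at a vertex involved in a relation---the first example of Section 2, where $\dim_k\mbox{End}_A(P_1)=2$ yet $r_1=r_2$, is exactly of this kind---so the finer Theorem \ref{teoirred} is indispensable. Once the factorization is in place the equalities $r_{a_j}=r_{a_{j+1}}$ follow, and the partition of $(R_A)_0$ by relations noted above yields the final reduction to one vertex per zero-relation.
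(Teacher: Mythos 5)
Your strategy is the paper's: reduce to consecutive vertices $a=s(\alpha_j)$, $b=t(\alpha_j)$ of the fixed relation, produce irreducible morphisms $P_b\to P_a$ and $I_b\to I_a$, and run Theorem \ref{teoirred} in both directions to get $r_a=r_b$, then chain along the relation and finish with Theorem \ref{vertex-nilpo}. (One small correction: the paper does use Corollary \ref{irred} here, namely to dispose immediately of the case $\mbox{End}_A(P_b)\simeq k\simeq \mbox{End}_A(I_a)$; Theorem \ref{teoirred} is only needed when one of these endomorphism algebras is nontrivial. Also note that the relevant endomorphism ring in part (a) is that of $P_b$, the target of the arrow, not of $P_a$.)

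The genuine gap is precisely in the step you flag as the main obstacle, and your sketched way of closing it is not the paper's and would not suffice. First, even the existence of an irreducible morphism $P_b\to P_a$ is not automatic: the paper derives it from the hypothesis, since otherwise there would be a path $\mu$ with $\mu\alpha_j\in I_A$, putting $b$ in a second zero-relation. Second, and more seriously, your claim that any ``bad'' path $a\rightsquigarrow b$ would either create a second zero-relation at $a$ or make the relation pass through $a$ twice is not justified and is not how the paper proceeds. When $\dim_k\mbox{End}_A(P_b)\geq 2$ there is a nonzero cycle through $b$, and the paper classifies the three possible subquiver configurations of that cycle relative to $\alpha_j$ and $\alpha_{j+1}$: one is excluded because it forces $b$ into two zero-relations, but another is excluded only by Bongartz's criterion \cite{B}, i.e.\ because it would make $A$ of infinite representation type. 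Representation-finiteness is thus an essential input to the factorization hypothesis, not merely the one-relation-once combinatorics; a purely combinatorial argument of the kind you propose cannot rule out, for instance, a second route $a\to c\to b$ avoiding $a$ and creating no new relation at $a$, and such a route already destroys the factorization $\eta=\mu f_1$, since distinct paths are linearly independent in a monomial algebra. Until that configuration analysis (with its appeal to finite representation type) is supplied, the equality $r_{a}=r_{b}$ for consecutive vertices is not established.
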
 

\begin{proof} Let $\alpha_n\dots \alpha_1$ be a zero-relation, $a=s(\alpha_j)$ and $b=t(\alpha_j)$ for some $2\leq j\leq n-1$.   Since each vertex in $(R_A)_0$ belongs to only one zero-relation then there is an irreducible morphism from $P_b$ to $P_a$. Indeed, if this is not the case then there exists a non-zero path $\mu:b \rightsquigarrow x$ in $Q_A$ such that $\mu\alpha_j\in I_{A}$. Therefore, $b$ is involved in two different zero-relations which is a contradiction to the hypothesis. 

 With similar arguments as above, we can affirm that there is an irreducible morphism from $I_b$ to $I_a$.

%Similarly, there is an irreducible morphism from $I_b$ to $I_a$. 

If $\mbox{dim}_k \mbox{End}_A(P_b)=1$ and $\mbox{dim}_{k}\mbox{End}_A(I_a)=1$ then by Corollary \ref{irred} we know that $r_a=r_b$. 

Now, assume that $\mbox{dim}_k \mbox{End}_A(P_b)\geq 2$.  Then there is a non-zero cycle starting in $b$ in $Q_A$.  Therefore we have the following possibilities of subquivers in $Q_A$: 

%\begin{displaymath}
%    \begin{array}{cccc}
%      A_1:\; \xymatrix {a\ar[r]^{\alpha_j}&b\ar[dr]\ar[r]^{\alpha_{j+1}}&\\
%\ar[ur]&&\ar@{..}@/^/[ll]}&
% A_2:\;\xymatrix {a\ar[r]^{\alpha_j}&b\ar[d]\ar[r]^{\alpha_{j+1}}&\\
%\ar[u]&\ar@{..}@/^/[l]&}  &  A_3:\;\xymatrix {a\ar[r]^{\alpha_j}&b\ar[r]^{\alpha_{j+1}}&\ar[d]\\
%&\ar[u]&\ar@{..}@/^/[l]}&  A_4:\;\xymatrix {a\ar[r]^{\alpha_j}&b\ar[r]^{\alpha_{j+1}}&\ar[d]\\
%\ar[u]&&\ar@{..}@/^/[ll]}\\
%\end{array}
%\end{displaymath}
\begin{enumerate}
\item[(i)] 
\begin{displaymath}
    \begin{array}{cccc}
       \xymatrix {a\ar[r]^{\alpha_j}&b\ar[dr]\ar[r]^{\alpha_{j+1}}&\\
\ar[ur]&&\ar@{..}@/^/[ll]}&
 \end{array}
\end{displaymath}
\item[(ii)]
\begin{displaymath}
\begin{array}{cccc}
\xymatrix {a\ar[r]^{\alpha_j}&b\ar[d]\ar[r]^{\alpha_{j+1}}&\\
\ar[u]&\ar@{..}@/^/[l]&}  &  \,\,\,\,\,\,\,\,
\xymatrix {a\ar[r]^{\alpha_j}&b\ar[r]^{\alpha_{j+1}}&\ar[d]\\
&\ar[u]&\ar@{..}@/^/[l]}& 
%\begin{displaymath}
%\begin{array}{cccc}
%\xymatrix {a\ar[r]^{\alpha_j}&b\ar[d]\ar[r]^{\alpha_{j+1}}&\\
%\ar[u]&\ar@{..}@/^/[l]&}  &  
%    \begin{array}{cccc}
%    \xymatrix {a\ar[r]^{\alpha_j}&b\ar[r]^{\alpha_{j+1}}&\ar[d]\\
%&\ar[u]&\ar@{..}@/^/[l]}&
 \end{array}
\end{displaymath}
\item[(iii)] 
\begin{displaymath}
    \begin{array}{cccc}
    \xymatrix {a\ar[r]^{\alpha_j}&b\ar[r]^{\alpha_{j+1}}&\ar[d]\\
\ar[u]&&\ar@{..}@/^/[ll]}
       \end{array}
\end{displaymath}
\end{enumerate}
\vspace{.2in}

 If there are subquivers as in Statement (ii) then we get that $b$ is involved in two different zero-relations which is a contradiction. 
 
 I  there is a subquiver as in  (i), then by  \cite[Theorem 1]{B} we get that $A$ is of infinite representation type, which is also a contradiction.

Thus the only possibility is  that $Q_A$ may have a subquiver as in Statement (iii), where the  arrows $\alpha_i$ belongs to  the cycle of the subquiver. In this case, since $\mbox{Hom}_A(P_b, P_a)$ has a basis consisting of all paths from $a$ to $b$ in $Q_A$ modulo the ideal $I_A$ we get that any non-zero morphism $\eta: P_b \rightarrow P_a$ is such that $\eta= \phi f_1$ where $\phi: P_a \rightsquigarrow P_a$  and $ f_1$ is an irreducible morphism from $P_b$ to $P_a$. Hence applying Theorem \ref{teoirred} we obtain that  $r_b \leq r_a$. 

 With similar arguments as above, if we assume that $\mbox{dim}_k \mbox{End}_A(I_b)\geq 2$ then we get that $r_a \leq r_b$. Therefore, $r_a=r_b$. 

By Theorem \ref{vertex-nilpo}, we conclude that the nilpotency index of $\Re(\mbox{mod}\,A)$ is determined by the length of the longest non-zero path of irreducible morphisms from the projective $P_u$ to the injective $I_u$, where $u \in(R_A)_0$. Furthermore, it is enough to consider only one vertex in each zero-relation. 
\end{proof}

\section{Application to toupie algebras}

In \cite{CGS}, we study the nilpotency index of the radical of a representation-finite toupie algebra with three branches and having only commutative relations. In such a case the Auslander-Reiten quiver of $\mbox{mod}\,A$ is a component with length.

In this section, we expand the study done in \cite{CGS}  to representation-finite toupie algebras where their Auslander-Reiten quivers are  components without length.
\vspace{.1in}

We start recalling the definition of toupie algebra. 
\vspace{.1in}

\begin{defi}\label{defitoupie}
A finite non linear quiver $Q$ is called  {\it toupie} if it has a unique source, a
unique sink, and for any other vertex $x$ in $Q$ there is exactly one arrow starting and one arrow ending in $x$. If $Q$ is a toupie quiver, then the algebra $A \simeq kQ_A/I_A$ is called a toupie algebra. 
\end{defi}

Throughout this section, we  consider representation-finite toupie algebras with three branches where one of them has a zero-relation and the others has a commutative relation.

 Precisely, we study the representation-finite toupie algebras
$A \simeq kQ_A/I_A$ given by the  quiver $Q_A$ as follows 
\begin{equation} \label{grafo}
\xymatrix @!0 @R=1.0cm  @C=1.5cm {&x_1\ar[r]^{\alpha_2}&\ar@{.}[r]&\ar[r]^{\alpha_{n_1}}&x_{n_1} \ar[rd] ^{\alpha_{n_1+1}}& \\
a\ar[r]^{\beta_1}\ar[ru]^{\alpha_1}\ar[rd]_{\gamma_1}&y_1\ar[r]^{\beta_2}&\ar@{.}[r]&
\ar[r]&y_{n_2}\ar[r]^{\beta_{n_2+1}}&b\\
&z_1\ar[r]_{\gamma_2}&\ar@{.}[r]&\ar[r]_{\gamma_{n_3}}&z_{n_3}\ar[ru]_{\gamma_{n_3+1}}&\\} 
\end{equation}

\noindent with $I_A=<\alpha_{n_1+1}\dots\alpha_1-\beta_{n_2+1}\dots\beta_1, \gamma_{_{t+j}}\dots \gamma_{_{j}}>$ for some $1\leq j, j+t\leq n_3$.
\vspace{.1in}

We denote by $(R_A)_0$ the vertices involved in the zero-relation. We observe that any  representation-finite toupie algebra has the property that  $\mbox{dim}_k \mbox{End}_A(P_i)=\mbox{dim}_k \mbox{End}_A(I_i)=1$, for all the vertices $i \in (Q_A)_{0}$. 

%We start the section recalling the definition of toupie algebra. 
%\vspace{.in}
%\begin{defi}\label{defitoupie}
%A finite non linear quiver $Q$ is called  {\it toupie} if it has a unique source, a
%unique sink, and for any other vertex $x$ in $Q$ there is {\color{red} exactly one arrow starting and one arrow ending in $x$.} If $Q$ is a toupie quiver, then the algebra $A \simeq kQ_A/I_A$ is called a toupie algebra. 
%\end{defi}

%{\color{red} The following property state in the next remark is fundamental to study the nilpotency index of the radical of a representation-finite toupie algebra.}

%\begin{obs}\label{obsdim}\emph{In any  representation-finite toupie algebra $\mbox{dim}_k \mbox{Hom}_A(P_a,I_a)=1$, for all the vertices $a \in (Q_A)_{0}$. } 
%\end{obs}

\vspace{.1in}

 The next proposition  is fundamental to determine the nilpotency index of the radical of a representation-finite toupie algebra.

\begin{prop}\label{lematoupie1}
Let $A \simeq kQ_A/I_A$ be the representation-finite toupie algebra given by the bound quiver stated in \eqref{grafo}. 
%$$\xymatrix @!0 @R=1.0cm  @C=1.5cm {&x_1\ar[r]^{\alpha_2}&\ar@{.}[r]&\ar[r]^{\alpha_{n_1}}&x_{n_1} \ar[rd] ^{\alpha_{n_1+1}}& \\
%a\ar[r]^{\beta_1}\ar[ru]^{\alpha_1}\ar[rd]_{\gamma_1}&y_1\ar[r]^{\beta_2}&\ar@{.}[r]&
%\ar[r]&y_{n_2}\ar[r]^{\beta_{n_2+1}}&b\\
%&z_1\ar[r]_{\gamma_2}&\ar@{.}[r]&\ar[r]_{\gamma_{n_3}}&z_{n_3}\ar[ru]_{\gamma_{n_3+1}}&\\} $$
%\noindent with $I_A=<\alpha_{n_1+1}\dots\alpha_1-\beta_{n_2+1}\dots\beta_1, \gamma_{_{t+j}}\dots \gamma_{_{j}}>$ for some $1\leq j, j+t\leq n_3$. We denote by $(R_A)_0$ the vertices involved in the zero-relation. 
Then the following statements hold. 
\begin{enumerate}
    \item[(a)] $r_{x_i}=r_{x_j}$ for all $i,j\in \{1,\dots, n_1\}$.
    \item[(b)] $r_{y_i}=r_{y_j}$ for all $i,j\in \{1,\dots, n_2\}$.
    \item[(c)] If $z_i,z_h\in (R_A)_0$ then $r_{z_i}=r_{z_h}$. 
    \item[(d)] if $z_i,z_h\notin (R_A)_0$ then  $r_{z_i}=r_{z_h}$.
    \item[(e)] %$r_{z_{j+t-1}}\geq r_{z_{j+t}}$ and $r_{z_{j}} \geq r_{z_{j-1}}$ where $z_j, \dots, z_{j+l}$ are vertices involved in $(Q_A)_0$. 
    {If $z_i\notin (R_A)_0$ and $z_h\in (R_A)_0$ then $r_{z_i}\leq r_{z_h}$.}
\end{enumerate}
\end{prop}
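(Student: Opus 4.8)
The plan is to use Corollary \ref{irred} as the main engine, exploiting that for a representation-finite toupie algebra one has $\mbox{End}_A(P_i)\simeq \mbox{End}_A(I_i)\simeq k$ for every vertex $i$, so the only thing to check before applying the corollary to an arrow $c\to d$ is the existence of an irreducible morphism $P_d\to P_c$ (resp. $I_d\to I_c$); for such an arrow this amounts to deciding whether $\mbox{rad}\,P_c=P_d$ (resp. $I_d/\mbox{soc}\,I_d=I_c$). First I would treat the top branch. Along each arrow $x_i\to x_{i+1}$ joining two vertices of the branch there is no relation among the paths issuing from $x_i$, because the only relation touching this branch is the commutativity relation, which involves the full path starting at the source $a$. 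Hence $\mbox{rad}\,P_{x_i}=P_{x_{i+1}}$ and, dually, $I_{x_{i+1}}/\mbox{soc}\,I_{x_{i+1}}=I_{x_i}$, producing irreducible morphisms $P_{x_{i+1}}\to P_{x_i}$ and $I_{x_{i+1}}\to I_{x_i}$. Applying both parts of Corollary \ref{irred} gives $r_{x_{i+1}}\le r_{x_i}$ and $r_{x_i}\le r_{x_{i+1}}$, so $r_{x_i}=r_{x_{i+1}}$; transitivity yields (a), and the identical argument on the middle branch yields (b).

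Next I would analyse the bottom branch $a\to z_1\to\cdots\to z_{n_3}\to b$, writing $\gamma_{i+1}\colon z_i\to z_{i+1}$ and recalling that the involved vertices are exactly $z_j,\dots,z_{t+j-1}$. A direct computation of radicals shows that $\mbox{rad}\,P_{z_i}=P_{z_{i+1}}$ holds for every $i$ except $i=j-1$: at the source $z_{j-1}$ of the zero-relation the composite of $\gamma_{i+1}$ with a path can vanish while the inner path does not, so $\mbox{rad}\,P_{z_{j-1}}$ is a proper uniserial quotient of $P_{z_j}$ and there is no irreducible morphism $P_{z_j}\to P_{z_{j-1}}$. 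Dually, $I_{z_{i+1}}/\mbox{soc}\,I_{z_{i+1}}=I_{z_i}$ fails only at $i=t+j-1$, the target of the relation. For every other bottom arrow both irreducible morphisms survive, so Corollary \ref{irred} gives $r_{z_i}=r_{z_{i+1}}$ there. Chaining these equalities across the arrows strictly inside $z_j,\dots,z_{t+j-1}$, where neither break occurs, proves (c); chaining them inside each non-involved segment $z_1,\dots,z_{j-1}$ and $z_{t+j},\dots,z_{n_3}$ shows that $r$ is constant on each of those two segments.

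At the two break arrows only one side survives: at $z_{j-1}\to z_j$ the morphism $I_{z_j}\to I_{z_{j-1}}$ still exists (since $j-1\neq t+j-1$ as $t\ge 1$), giving $r_{z_{j-1}}\le r_{z_j}$, and at $z_{t+j-1}\to z_{t+j}$ the morphism $P_{z_{t+j}}\to P_{z_{t+j-1}}$ still exists, giving $r_{z_{t+j}}\le r_{z_{t+j-1}}$. Combined with the constancy on each segment and with (c), this produces the ``tent'' $r_{z_1}=\cdots=r_{z_{j-1}}\le r_{z_j}=\cdots=r_{z_{t+j-1}}\ge r_{z_{t+j}}=\cdots=r_{z_{n_3}}$. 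In particular every non-involved $r_{z_i}$ is bounded above by the common involved value, which is exactly statement (e).

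The remaining and, I expect, hardest point is statement (d): the tent only gives that the two non-involved segments are each $\le$ the involved value, not that they are equal to one another. The one-sided maps at the break arrows cannot close this gap and, since the projective and injective modules of the two non-involved segments are genuinely different uniserial modules, the equality cannot come from a local radical comparison. To prove $r_{z_{j-1}}=r_{z_{t+j}}$ I would instead compute the longest non-zero path $P_{z_i}\rightsquigarrow S_{z_i}\rightsquigarrow I_{z_i}$ for a non-involved $z_i$ directly, using Lemma \ref{composite} to split its length as a length from $P_{z_i}$ to $S_{z_i}$ plus a length from $S_{z_i}$ to $I_{z_i}$, and Theorem \ref{explotan} to read off these lengths inside the standard component $\Gamma_A$. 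The key claim to establish is that, precisely because $z_i$ is not involved in the zero-relation, this longest path is forced through the commutative part of $\Gamma_A$ determined by the top and middle branches, whose contribution is the same for every non-involved vertex; hence the length is independent of the choice of non-involved $z_i$, giving $r_{z_{j-1}}=r_{z_{t+j}}$ and completing (d). Verifying that this common commutative contribution is indeed what the longest path sees, uniformly across both non-involved segments, is the crux of the argument.
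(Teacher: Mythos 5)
Your treatment of parts (a), (b), (c) and (e) is correct and is essentially the paper's own argument: since every indecomposable projective and injective over this toupie algebra has endomorphism algebra $k$, Corollary \ref{irred} applies to each arrow of a branch once one knows which irreducible morphisms $P_{z_{i+1}}\to P_{z_i}$ and $I_{z_{i+1}}\to I_{z_i}$ actually exist, and your identification of the two ``break'' arrows --- $z_{j-1}\to z_j$ for projectives and $z_{t+j-1}\to z_{t+j}$ for injectives --- is accurate. In fact your version of (e) is stated more carefully than the paper's, which at the arrow $z_{j-1}\to z_j$ invokes an irreducible morphism between projectives where it is the injective one that exists and yields the inequality in the claimed direction.

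The genuine gap is part (d). As you yourself concede, chaining Corollary \ref{irred} along arrows only shows that $r$ is constant on each of the two non-involved segments $z_1,\dots,z_{j-1}$ and $z_{t+j},\dots,z_{n_3}$ and that each common value is bounded above by the involved value; it does not show the two segments share the same value. Your proposed repair --- that the longest path $P_{z_i}\rightsquigarrow S_{z_i}\rightsquigarrow I_{z_i}$ is ``forced through the commutative part'' uniformly for every non-involved $z_i$ --- is an unverified assertion, not an argument, so statement (d) remains unproved in your write-up. For comparison, the paper disposes of (d) with the single sentence that it ``follows with similar arguments'' as (a), which covers adjacent non-involved vertices but likewise does not bridge the two segments; so you have isolated a real subtlety rather than invented one, but to close it you would need to actually carry out a length computation across the zero-relation (for instance via explicit intermediate modules, in the spirit of the modules $M_{z_i}$ and the cycles $\rho_{z_i}$ that the paper constructs in Lemma \ref{three} and exploits in the proof of Theorem \ref{toupie}), not merely describe what such a computation should show.
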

\begin{proof}
(a) Without loss of generality, we may assume that there is an arrow $x_i\rightarrow x_j$. Then we have irreducible morphisms from $P_{x_j}$ to $P_{x_i}$ and from $I_{x_j}$ to $I_{x_i}$. The result follows from the fact that  $\mbox{dim}_k \mbox{End}_A(P_i)=\mbox{dim}_k \mbox{End}_A(I_i)=1$, for all the vertices $a \in (Q_A)_{0}$ and  Corollary \ref{irred}.

Statements (b), (c) and (d) follow with similar arguments than Statement (a).

(e) Without loss of generality, we may assume that there is an arrow from $z_i$ to $z_{i+1}$ with $z_i\notin (R_A)_0$ and $z_{i+1}\in (R_A)_0$. Then, there is an irreducible morphism from $P_{z_{i+1}}$ to $P_{z_{i}}$. The result follows from  Corollary \ref{irred}  since $\mbox{End}_A(P_{z_{i+1}})\simeq k$.

%Since there are arrows $z_{j-1}\rightarrow z_{j}$ and $z_{j+t-1}\rightarrow z_{j+t}$ then there are irreducible morphisms from $P_{z_{t+j}}$ to $P_{z_{j+t-1}}$ and from $I_{z_{j-1}}$ to $I_{z_{j}}$. The result follows from  Corollary \ref{irred}  since $\mbox{End}_k(P_{z_{j+t}})\simeq k$ and $\mbox{End}_k(I_{z_{j-1}})\simeq k$.   
\end{proof}

 The next technical result will allow us to prove that  the vertices of $Q_A$ that belong to the zero-relation are the ones that determine the nilpotency index of the radical of the module category of the considered toupie algebras.

\begin{lema}\label{three}
Let $A \simeq kQ_A/I_A$ be the representation-finite toupie algebra defined as in  \eqref{grafo}. 
%given by the following quiver 
%$$\xymatrix @!0 @R=1.0cm  @C=1.5cm {&x_1\ar[r]^{\alpha_2}&\ar@{.}[r]&\ar[r]^{\alpha_{n_1}}&x_{n_1} \ar[rd] ^{\alpha_{n_1+1}}& \\
%a\ar[r]^{\beta_1}\ar[ru]^{\alpha_1}\ar[rd]_{\gamma_1}&y_1\ar[r]^{\beta_2}&\ar@{.}[r]^{\beta_{n_2}}&
%\ar[r]&y_{n_2}\ar[r]^{\beta_{n_2+1}}&b\\
%&z_1\ar[r]_{\gamma_2}&\ar@{.}[r]&\ar[r]_{\gamma_{n_3}}&z_{n_3}\ar[ru]_{\gamma_{n_3+1}}&\\} $$
%\noindent with $I_A=<\alpha_{n_1+1}\dots\alpha_1-\beta_{n_2+1}\dots\beta_1, \gamma_{_{t+j}}\dots \gamma_{_{j}}>$ for some $1\leq j, j+t\leq n_3$. 
Consider the indecomposable $A$-modules $M_{z_{i}}=((M{z_{i}})_c, (M{z_{i}})_\delta)_{c\in Q_0, \delta\in Q_1}$ define as follows: 

\[\begin{array}{cc}
  (M{z_{i}})_c=\left\{
  \begin{array}{ll}
    k^2 & if \; \; c=z_{i} \\
    k & \hbox{otherwise}
  \end{array}
\right., &  (M{z_{i}})_\delta=\left\{
  \begin{array}{ll}
  \left(
         \begin{array}{cc}
           1 &  0
         
         \end{array}
       \right)  & if \; \; \delta= \gamma_{i+1} \\
  \vspace{0.1in}
    \left(
         \begin{array}{c}
           0 \\
          1 \\
         \end{array}
       \right) & if \; \; \delta= \gamma_{i}\\
    \mbox{Id} & \hbox{otherwise}
  \end{array}
\right.
\end{array}\]
\noindent for $i=j, \dots, j+t-1$. Then there are non-zero cycles $\rho_{z_{i}}:M_{z_{i}}\rightsquigarrow M_{z_{i}}$,  which are compositions of $n$ non-isomorphisms where $n$ is equal to two times the number of arrows of the branch having the zero-relation.  Moreover, each cycle $\rho_{z_i}$ 
 factors through the simple $S_{z_{i}}$ and   it belongs to $\Re^{n}(M_{z_{i}}, M_{z_{i}})$.
\end{lema}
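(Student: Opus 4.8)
The plan is to first pin down the morphism $\rho_{z_i}$ by computing $\mathrm{End}_A(M_{z_i})$, and then to realize it as an explicit path of $n=2(n_3+1)$ irreducible morphisms. Writing a general endomorphism as a scalar $\lambda_c$ at each vertex $c\neq z_i$ together with a $2\times 2$ matrix $T$ at $z_i$, and imposing commutativity only against the two non-identity structure maps $(M_{z_i})_{\gamma_i}=\left(\begin{smallmatrix}0\\1\end{smallmatrix}\right)$ and $(M_{z_i})_{\gamma_{i+1}}=\left(\begin{smallmatrix}1&0\end{smallmatrix}\right)$, one finds that all $\lambda_c$ coincide with a common value $\lambda$ and that $T=\lambda\,\mathrm{Id}+c\,N$ with $N=\left(\begin{smallmatrix}0&0\\1&0\end{smallmatrix}\right)$. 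Hence $\mathrm{End}_A(M_{z_i})\cong k[N]/(N^2)$ is local (so $M_{z_i}$ is indecomposable) and $\mathrm{rad}\,\mathrm{End}_A(M_{z_i})=\langle N\rangle$. I take $\rho_{z_i}:=N$, the endomorphism that is $0$ off $z_i$ and $N$ at $z_i$. It is non-zero, and it factors as $\rho_{z_i}=\iota\,\pi$, where $\pi\colon M_{z_i}\twoheadrightarrow S_{z_i}$ is the projection onto the first coordinate of $z_i$ (the copy of $S_{z_i}$ in $\mathrm{top}\,M_{z_i}$) and $\iota\colon S_{z_i}\hookrightarrow M_{z_i}$ is the inclusion onto the second coordinate (the copy of $S_{z_i}$ in $\mathrm{soc}\,M_{z_i}$); compatibility of $\pi$ and $\iota$ with $\gamma_i,\gamma_{i+1}$ is immediate from the defining matrices. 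This already yields the non-vanishing of $\rho_{z_i}$ and its factorization through $S_{z_i}$.

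It then remains to place $\rho_{z_i}$ in $\Re^{n}(M_{z_i},M_{z_i})$ with $n=2(n_3+1)$. A key simplification is that $\dim_k\mathrm{Hom}_A(M_{z_i},S_{z_i})=\dim_k\mathrm{Hom}_A(S_{z_i},M_{z_i})=1$, since these count the multiplicity of $S_{z_i}$ in $\mathrm{top}\,M_{z_i}$, respectively $\mathrm{soc}\,M_{z_i}$, both equal to one. Consequently every non-zero radical morphism $M_{z_i}\to S_{z_i}$ is a scalar multiple of $\pi$, and every non-zero one $S_{z_i}\to M_{z_i}$ a multiple of $\iota$. Therefore it suffices to produce two non-zero paths of irreducible morphisms
\[M_{z_i}=W_0\to W_1\to\cdots\to W_{n_3+1}=S_{z_i}=V_0\to V_1\to\cdots\to V_{n_3+1}=M_{z_i},\]
each of length $n_3+1$: their composite is then forced to be a non-zero multiple of $\iota\pi=N$, exhibiting $\rho_{z_i}$ as a composition of $2(n_3+1)=n$ irreducible morphisms that factors through $S_{z_i}$, whence $\rho_{z_i}\in\Re^{n}$. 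The modules $W_s,V_s$ are the string-like representations obtained by shifting the double point along the branch carrying the zero relation; the length $n_3+1$ of each chain is exactly the number of arrows $\gamma_1,\dots,\gamma_{n_3+1}$ of that branch, which is precisely what produces the factor two and, crucially, makes $n$ independent of $i$.

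The main obstacle is this second step: constructing the two chains and proving their composites do not vanish. Each consecutive map must be shown to be irreducible, which I would read off from the meshes of the standard component $\Gamma_A$ (standard because $A$ is representation-finite). The delicate feature is the zero relation $\gamma_{t+j}\cdots\gamma_j$ itself: by Theorem \ref{explotan} it is exactly a zero relation that allows a non-zero path of irreducible morphisms to drop into a higher power of the radical, so one must verify the chosen paths are not annihilated. I would control this by anchoring the computation at the vertex $z_i$, tracking the distinguished basis vector of the first coordinate of $(M_{z_i})_{z_i}$ down the $W$-chain to $S_{z_i}$ and back up the $V$-chain into the second coordinate, i.e. checking directly that the total composite acts as $N$ at $z_i$. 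Since $N\neq 0$, both chains are non-zero, and combining with the reduction above completes the proof.
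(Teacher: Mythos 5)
Your first half is correct and in places sharper than what the paper records: the computation showing $\mathrm{End}_A(M_{z_i})\cong k[N]/(N^2)$ with $N$ supported at $z_i$ (hence $M_{z_i}$ indecomposable), the identification $\rho_{z_i}=N=\iota\pi$ with $\pi$ the projection onto the copy of $S_{z_i}$ in the top and $\iota$ the inclusion of the copy in the socle, and the observation that $\dim_k\mathrm{Hom}_A(M_{z_i},S_{z_i})=\dim_k\mathrm{Hom}_A(S_{z_i},M_{z_i})=1$ are all sound, and they correctly reduce the lemma to producing one non-zero chain of $n_3+1$ radical morphisms from $M_{z_i}$ down to $S_{z_i}$ and one of the same length back up.

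The gap is that this remaining step, which is the actual content of the lemma, is never carried out. You describe the intermediate modules only as ``string-like representations obtained by shifting the double point along the branch'' and defer both their irreducibility and the non-vanishing of the composites to an unspecified inspection of the meshes of $\Gamma_A$; for a general toupie algebra of shape \eqref{grafo} that inspection is not routine, and no candidate chain is actually written down. Moreover, you have made the task strictly harder than necessary: the lemma only asserts that $\rho_{z_i}$ is a composite of $n$ \emph{non-isomorphisms} between indecomposables, and any such composite automatically lies in $\Re^{n}(M_{z_i},M_{z_i})$; irreducibility is not needed (it would only matter for the stronger claim $\rho_{z_i}\notin\Re^{n+1}$, which is not made). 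The paper's proof exploits exactly this: it descends from $M_{z_i}$ to $S_{z_i}$ through $n_3+1$ explicit epimorphisms, each step quotienting the previous module by a direct summand of its socle (first $S_{z_i}$, then $S_{z_{i-1}},\dots,S_{z_1}$, then the submodule $L$ supported on the two commutative branches, then $S_{z_{n_3}},\dots,S_{z_{i+1}}$), checks that every intermediate module has endomorphism ring $k$ and hence is indecomposable, and dualizes to obtain $n_3+1$ monomorphisms back up; your one-dimensionality argument would then force the total composite to be a non-zero multiple of $N$, which in fact tightens the paper's ``clear by construction'' claim of non-vanishing. To complete your proof you should either adopt such an explicit quotient-and-include chain, or actually exhibit your modules $W_s,V_s$ and verify the irreducibility and non-vanishing you are currently only promising.
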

\begin{proof} Consider the $A$-module $M_{z_{i}}$. By definition, it is clear that $S_{z_{i}}$ is a summand of the socle of $M_{z_{i}}$. Then we can consider the epimorphism 
\[M_{z_{i}} \rightarrow M_{z_{i}}/S_{z_{i}}=N_1.\]
Now, we observe that  $S_{z_{{i-1}}}$ is a direct summand of the socle of $N_1$. Then  again we can consider the epimorphism $N_1\rightarrow N_1/S_{z_{{i-1}}}=N_2$. We can iterate $i$ times  this argument until we get  the simple $A$-module $S_{z_1}$ as a direct summand of the socle of  $N_{i-1}$. Therefore  we construct a chain of $i$ epimorphisms as we describe below:  
\[M_{z_{i}} \rightarrow N_1 \rightarrow N_2 \rightarrow \dots \rightarrow N_{i-1}\rightarrow N_{{i-1}}/S_{z_1}=N_{{i}}. \]

Note  that the module  $N_{{i}}=((N_{{i}})_c, (N_{{i}})_\delta )_{c\in Q_0, \delta\in Q_1}$ 
has the following representation:
\[\begin{array}{cc}
  (N_{{i}})_c=\left\{
  \begin{array}{ll}
    0 & if \; \; c\in \{z_1,\dots, z_{{i-1}}\} \\
    k & \hbox{otherwise}
  \end{array}
\right., &  (N_{{i}})_\delta=\left\{
  \begin{array}{ll}
   0  & if  \; \; \delta\in \{\gamma_1,\dots, \gamma_{i}\} \\
    \mbox{Id} & \hbox{otherwise}
  \end{array}
\right..
\end{array}\]
If we consider $L=(L_c,L_\delta)_{c\in Q_0, \delta\in Q_1}$ the $A$-module such that its representation is: 
  \[(L)_c=\left\{
  \begin{array}{ll}
    k & if  \; \; c\in \{a,x_1,\dots,x_{n_1}, y_1,\dots, y_{n_2},b\} \\
    0 & \hbox{otherwise}
  \end{array}
\right., \] 
\[(L)_\delta=\left\{
  \begin{array}{ll}
   \mbox{Id}  & if  \; \; \delta\in \{\alpha_1,\dots, \alpha_{n_1+1}, \beta_1,\dots, \beta_{n_2+1}\} \\
    0 & \hbox{otherwise}
  \end{array}
\right.,\]

\noindent then $L$ is a submodule of $N_{{i}}$ and we can consider the epimorphism $N_{{i}}\rightarrow N_{{i}}/L$. Observe that $N_{{i+1}}=N_{{i}}/L$ is an uniserial $A$-module whose top is $S_{z_{{i}}}$ and whose socle is $S_{z_{n_3}}$. Therefore, we can build the chain of $n_3-i$ epimorphisms where in each step we put the $A$-module which is a quotient of the previous one by its socle: 
\[N_{{i+1}}\rightarrow N_{{i+1}}/S_{z_{n_3}}\rightarrow \dots \rightarrow N_{n_3}\rightarrow N_{n_3}/S_{z_{{i+1}}}=S_{z_{i}}.\]
Hence we construct a chain of $n_3+1$ epimorphisms as follows 
\[M_{z_{i}} \rightarrow N_1 \rightarrow N_2 \rightarrow \dots \rightarrow N_{{i}} \rightarrow  N_{{i+1}}\rightarrow \dots \rightarrow N_{n_3}\rightarrow S_{z_{k_i}}.\]

Similarly, we can construct a chain of $n_3+1$ monomorphisms from $S_{z_{i}}$ to $M_{z_{i}}$ by considering the natural inclusions in each step. 

By construction it is clear that the cycle from $M_{z_{i}}$ to $M_{z_{i}}$ is non-zero. %proving the result.

We observe that since $\mbox{End}_A(M_{z_{i}})\simeq k^2$ (we have the identity and $\rho_{z_{i}}$) and $\mbox{rad}(\mbox{End}_A(M_{z_{i}}))\simeq k$ then $\mbox{End}_A(M_{z_{i}})$ is local and, in consequence, $M_{z_{i}}$ is indecomposable. 

On the other hand, for all the intermediate $A$-modules $N_s$ use to build the chain it is not hard to see that $\mbox{End}_A(N_s)\simeq k$. Hence  $\mbox{End}_A(N_s)$ is a local algebra and in consequence, each $N_s$ is an indecomposable $A$-module. 

 Therefore, we prove that there is a non-zero chain of non isomorphisms between indecomposable $A$-modules from $M_{z_{i}}$ to $M_{z_{i}}$ that factors through the simple $S_{z_{i}}$ of length at least $n$, where $n$ is equal to two times the number of arrows of the branch having the zero-relation. Furthermore  $\rho_{z_i}\in \Re^{n}(M_{z_{i}}, M_{z_{i}})$. 
\end{proof}

%As an immediate consequence of the above lemma and the fact that $A$ is representation-finite algebra we get the following corollary.

%\begin{coro}\label{coro-three}
%Let $A \simeq kQ_A/I_A$ be the representation-finite toupie algebra defined in (\ref{grafo}).
%Then the non-zero cycles $\rho_{z_i}\in \Re^{m}(M_{z_{i}}, M_{z_{i}})$ where $m$ is equal to two times the number of arrows of the branch having the zero-relation.
%\end{coro}

\begin{obs}\label{obsmono}
\emph{Let $M_{z_i}$ and $\rho_{z_i}$ be defined as in Lemma \ref{three}. We observe that there is a monomorphism from $\phi_{z_{i}}:P_{z_{i}}\rightarrow M_{z_{i}}$ such that $\rho_{z_{i}}\phi_{z_{i}} \neq 0$. Indeed, since in the representation of the $A$-module $M_{z_i}$ the $k$-vector space $(M_{z_i})_{z_i}$ has dimension two, we may consider  $\{x,y\}$ a basis for such a vector space.}

\emph{Now, since $(M_{z_i})_{\gamma_{i}}(y)\neq 0$, we  define $\phi_{z_{i}}(e_{z_{i}})=y$. It is not hard to check that $\phi_{z_{i}}$ is a monomorphism. Moreover, since the element $y$ corresponds to an element in the top of $M_{z_i}$ we have that $\rho_{z_{i}}(y)\neq 0$ and thus we get the claim. }

\emph{With similar arguments as above we can prove  that there is an epimorphism from $\psi_{z_{i}}:M_{z_{i}}\rightarrow I_{z_{i}}$ such that $\psi_{z_{i}}\rho_{z_{i}}\neq 0$.} 
\end{obs}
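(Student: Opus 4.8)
The plan is to verify the two assertions separately, establishing the monomorphism $\phi_{z_i}$ first and obtaining the epimorphism $\psi_{z_i}$ by a dual argument. Throughout I would exploit the very explicit representation of $M_{z_i}$ given in Lemma \ref{three} together with the factorization of $\rho_{z_i}$ through $S_{z_i}$ established there.

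For the monomorphism, I would begin from the canonical identification $\mbox{Hom}_A(P_{z_i},M_{z_i})\cong (M_{z_i})_{z_i}$ (which has dimension $[M_{z_i}:S_{z_i}]=2$ by the projective analogue of Lemma \ref{multiplicity}), under which a homomorphism is prescribed by the image $\phi_{z_i}(e_{z_i})$ of the top generator of $P_{z_i}$. Since the only arrow ending at $z_i$ is $\gamma_i$ and $(M_{z_i})_{\gamma_i}$ has one-dimensional image, the radical of $M_{z_i}$ meets $(M_{z_i})_{z_i}$ in a line, so the top $(M_{z_i}/\mbox{rad}\,M_{z_i})_{z_i}$ is one-dimensional. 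I would choose $y\in(M_{z_i})_{z_i}$ with nonzero class in this top, namely the basis vector not lying in $\mbox{im}(M_{z_i})_{\gamma_i}$, and set $\phi_{z_i}(e_{z_i})=y$.

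To see that $\phi_{z_i}$ is a monomorphism I would compute its image. The outgoing map $(M_{z_i})_{\gamma_{i+1}}$ sends $y$ to a nonzero element of $(M_{z_i})_{z_{i+1}}$, and from there every representation map along the $z$-branch toward the sink $b$ is the identity; hence the submodule of $M_{z_i}$ generated by $y$ is uniserial, supported on $z_i,z_{i+1},\dots,z_{n_3},b$. Because $z_i$ is involved in the zero-relation only as an interior vertex, i.e. $i\geq j$, no path starting at $z_i$ contains the full relation $\gamma_{t+j}\cdots\gamma_j$ (which would have to start at $z_{j-1}$), so $P_{z_i}$ is exactly this same uniserial module and $\dim_k(\mbox{im}\,\phi_{z_i})=\dim_k P_{z_i}$. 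A surjection between modules of equal finite dimension is an isomorphism, whence $\phi_{z_i}$ is injective. Finally, writing $\rho_{z_i}=gh$ with $h\colon M_{z_i}\twoheadrightarrow S_{z_i}$ and $g\colon S_{z_i}\hookrightarrow M_{z_i}$ as in Lemma \ref{three}, the map $h$ is a surjection onto a simple and so is, up to scalar, the projection onto the one-dimensional top of $M_{z_i}$ at $z_i$; thus $h(y)\neq 0$ by the choice of $y$, and since $g$ is a nonzero map out of a simple it is injective, giving $\rho_{z_i}\phi_{z_i}(e_{z_i})=g(h(y))\neq 0$.

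The epimorphism $\psi_{z_i}\colon M_{z_i}\to I_{z_i}$ with $\psi_{z_i}\rho_{z_i}\neq 0$ would follow dually. Using $\dim_k\mbox{Hom}_A(M_{z_i},I_{z_i})=[M_{z_i}:S_{z_i}]=2$ from Lemma \ref{multiplicity} and the fact that $g$ embeds $S_{z_i}$ into the socle of $M_{z_i}$, I would select $\psi_{z_i}$ (as the dual uniserial quotient of $M_{z_i}$, which is surjective) not vanishing on this socle copy, so that $\psi_{z_i}\rho_{z_i}=(\psi_{z_i}g)h\neq 0$. I expect the main obstacle to be the monomorphism step, specifically confirming that the submodule of $M_{z_i}$ generated by the chosen top element $y$ is genuinely isomorphic to the whole projective $P_{z_i}$; this rests on checking that the zero-relation truncates neither module, after which the matching of dimensions forces injectivity for free.
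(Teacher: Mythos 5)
Your proof is correct and follows essentially the same route as the paper's own argument: choose the top basis vector $y$ of the two-dimensional space $(M_{z_i})_{z_i}$, define $\phi_{z_i}$ by $e_{z_i}\mapsto y$ via $\mbox{Hom}_A(P_{z_i},M_{z_i})\cong (M_{z_i})_{z_i}$, and use the factorization of $\rho_{z_i}$ through the top and socle copies of $S_{z_i}$ to get nonvanishing; you merely fill in the injectivity check (comparison with the uniserial $P_{z_i}$, which the zero-relation cannot truncate since $i\geq j$) that the paper dismisses as ``not hard to check.'' Note also that you have implicitly, and correctly, read the paper's condition ``$(M_{z_i})_{\gamma_{i}}(y)\neq 0$'' as the type-correct ``$(M_{z_i})_{\gamma_{i+1}}(y)\neq 0$,'' which is what is actually needed for $y$ to lie in the top rather than in the radical.
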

\vspace{.1in}

 We illustrate Lemma \ref{three} with the following example. 
\begin{ej}\label{ej1}
\emph{Consider the toupie algebra $A=kQ_A/I_A$ given by the quiver $Q_A$}

\begin{displaymath}
\xymatrix {  & 1 \ar[rdd]^{\gamma_1}  \ar[ld]^{\alpha_1} \ar[dd]^{\beta_1} & \\ 
2 \ar[d]^{\alpha_2}  &  & \\
3 \ar[rd]^{\alpha_3}  & 5 \ar[d]^{\beta_2}  & 6 \ar[ld]^{\gamma_2}\\
  & 4 & }
\end{displaymath}
\vspace{.1in}

\noindent \emph{with  $I_A=<\beta_2\beta_1-\gamma_2\gamma_1, \alpha_3\alpha_2\alpha_1>$.}

\emph{The Auslander-Reiten quiver of  $\mbox{mod} \,A$ is as follows:}
\vspace{.1in}
{
\begin{displaymath}
\def\objectstyle{\scriptstyle}
\def\labelstyle{\scriptstyle}
   \xymatrix @!0 @R=1.1cm  @C=1cm{& & & S_3 \ar[rd]\ar@{.}[rr]  & & S_2 \ar[rd]\ar@{.}[rr] & &  \tau S_3 \ar[rd]\ar@{.}[rr]  & &S_3\ar[rd]\ar@{.}[rr] &   &S_2  && && \\
   & & & & {N} \ar[rd]\ar@{.}[rr] \ar[ru] & & \tau^{2}N \ar[rd]\ar@{.}[rr] \ar[ru]&  &\tau N \ar[ru]\ar[rd] \ar@{.}[rr] & & N \ar[ru]& && && \\
  & & && & { P_1 } \ar[rd]\ar@{.}[rr] \ar[ru]& & \ar[ru]\tau I_4 \ar[rd] \ar@{.}[rr]& & I_4 \ar[rd] \ar[ru]& &  && &&\\
    & & P_2 \ar[rd] \ar@{.}[rr]& & \tau M_3 \ar@{.}[rr] \ar[rd] \ar[ru] & & M_3\ar[rd] \ar[ru] \ar@{.}[rr] & & M_2 \ar[rd] \ar@{.}[rr] \ar[ru] & & \tau I_3 \ar[rd] \ar@{.}[rr] & & I_3 \ar[rd]  & && \\
    &{\bf  P_3}\ar@{.}[rr] \ar[rd] \ar[ru] &  & { \tau^{-1} P_3 } \ar[rd]\ar[ru]\ar@{.}[rr]
   & & { \tau^{-2} P_3 } \ar[rd] \ar[ru]\ar@{.}[rr]& &  \tau^{-3} P_3 \ar[rd]\ar@{.}[rr] \ar[ru] & & \tau^{-4} P_3 \ar[rd] \ar[ru]\ar@{.}[rr] & &  \tau^{-5} P_3 \ar[rd] \ar[ru]\ar@{.}[rr] && I_2 \ar[rd] && \\
 P_4 \ar[rd] \ar[ru] \ar[rdd] \ar@{.}[rr]& & { \tau^{-1} P_4 } \ar[rd] \ar[ru] \ar[rdd]\ar@{.}[rr] &&{\tau^{-2} P_4}  \ar[rdd]\ar[rd] \ar[ru]\ar@{.}[rr]& & { \tau^{-3} P_4}   \ar[rd] \ar[ru] \ar@{.}[rr] \ar[rdd] &  &{ \tau^{-4} P_4 }  \ar[rdd]\ar[ru] \ar[rd]\ar@{.}[rr] & &  \tau^{-5} P_4 \ar[ru] \ar[rd]\ar@{.}[rr] \ar[rdd]& & {\tau^{-6} P_4}  \ar[rdd]\ar[rd] \ar[ru]\ar@{.}[rr]& & I_1& \\
  &P_5 \ar[ru]\ar@{.}[rr] & & { \tau^{-1} P_5 }  \ar[ru] \ar@{.}[rr] & & \tau^{-2} P_5 \ar[ru]\ar@{.}[rr] & &{\tau^{-3} P_5 }\ar@{.}[rr] \ar[ru] & &\tau^{-4} P_5\ar[ru] & & \tau^{-5} P_5 \ar[ru]\ar@{.}[rr]  && I_5 \ar[ru] &&\\
  & P_6 \ar[ruu]\ar@{.}[rr]&    &  { \tau^{-1} P_6 }\ar@{.}[rr]  \ar[ruu] & & \tau^{-2} P_6 \ar@{.}[rr]\ar[ruu] & & \tau^{-3} P_6\ar@{.}[rr]  \ar[ruu]   & & \tau^{-4} P_6 \ar[ruu] \ar@{.}[rr]& & \tau^{-5} P_6  \ar@{.}[rr]\ar[ruu] && I_6 \ar[ruu]&& \\}
\end{displaymath}}
\vspace{.1in}

\noindent \emph{where we identify the copies of the simple $A$-modules  $S_2$, $S_3$ and $N$. }

\emph{Observe that in this case there are two non-zero cycles} 
\emph{\[M_3={{\small\txt{\;\;\;\;\;1\\2 \\ 3\;\;\\ \\}\;\txt{$\frac{}{}$\\56\;\;\;3 \\ 4\\ \\ }}}\longrightarrow {{\small\txt{\;\;\;\;\;1\\2 \\ \;\;\\ \\}\;\txt{$\frac{}{}$\\56\;\;\;3 \\ 4\\ \\ }}} \longrightarrow {\txt{1\;\;\\56\;\;3 \\ 4\\ \\ }} \longrightarrow 3 \longrightarrow {\txt{2\\3 }}\longrightarrow {{\small\txt{\;\;\;1\\2 \\ 3\;\;\\ \\}\;\txt{$\frac{}{}$\\56 \\ 4\\ \\ }}} \longrightarrow M_3 \]}
\noindent \emph{and}
\emph{\[M_2= {\txt{\;\;\;1\;\;\;\;\;\;\;2\\2\;\;56\;\;\;3 \\ 4\\ \\ }}\longrightarrow {\txt{\;\;\;1\;\;\;\;\;\;\;2\\56\;\;\;3 \\ 4\\ \\ }} \longrightarrow  {\txt{2\\3 }} \longrightarrow 2 \longrightarrow {\txt{\;\;\;1\;\;\;\;\;\;\;\\2\;\;56\;\;\; \\ 4\\ \\ }}\longrightarrow {{\small\txt{\;\;\;\;\;1\\2 \\ \;\;\\ \\}\;\txt{$\frac{}{}$\\56\;\;\;3 \\ 4\\ \\ }}} \longrightarrow M_2\]}

\noindent \emph{where we denote each indecomposable $A$-module by their composition factors. Moreover, observe that the arrows represent irreducible morphisms. Therefore, both cycles are of the same length.
Moreover,  since both paths are sectional then by \cite[Appendix]{IT} we know that they belong to $\Re^{6} \backslash \Re^{7}$, that is, they are of length six.} 
\end{ej}

\begin{obs} \label{tecnico}
\emph{Let $A$ be a finite dimensional $k$-algebra.  Consider $M$ and $N$ indecomposable  $A$-modules such that $\dim_k (\mbox{Hom}_{A}(M,N))=1$.  Assume that  there is an irreducible morphism $f$ from $M$ to $N$. Then $f \in \Re(M,N) \backslash \Re^{2}(M,N)$. Then any other morphism $g:M \rightarrow N$ is such that $g =\alpha f$ with $\alpha \in k$. Clearly $\Re^{2}(M,N) =0$, since  otherwise $\dim_k (\mbox{Hom}_{A}(M,N)) \geq 2$.}
\end{obs}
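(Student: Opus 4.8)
The plan is to assemble the statement out of three standard facts about the radical of $\mbox{mod}\,A$, since under the hypothesis $\dim_k(\mbox{Hom}_A(M,N))=1$ the whole radical filtration collapses after the first step. First I would show $f\in\Re(M,N)\backslash\Re^2(M,N)$. That $f\in\Re(M,N)$ follows because $M,N$ are indecomposable and $f$ is irreducible, hence not an isomorphism, and for indecomposable modules a morphism lies in the radical exactly when it is not an isomorphism. That $f\notin\Re^2(M,N)$ is where I would invoke the description recalled in \ref{rad}: for indecomposable $M,N$ the bimodule $\mbox{Irr}(M,N)=\Re(M,N)/\Re^2(M,N)$ has as its nonzero classes precisely the irreducible morphisms, so the class of $f$ is nonzero and therefore $f\in\Re(M,N)\backslash\Re^2(M,N)$.

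Next I would use the dimension hypothesis to pin down $\mbox{Hom}_A(M,N)$. Since $f$ is irreducible it is in particular nonzero, so $\{f\}$ is a basis of the one-dimensional space $\mbox{Hom}_A(M,N)$; consequently every $g:M\to N$ is written uniquely as $g=\alpha f$ with $\alpha\in k$, which is the second assertion.

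Finally I would establish $\Re^2(M,N)=0$, the only point where the one-dimensionality does genuine work. I would argue by contradiction: suppose there is a nonzero $h\in\Re^2(M,N)$. Because $\Re^2(M,N)$ is a $k$-subspace with $h\in\Re^2(M,N)$ but $f\notin\Re^2(M,N)$, the morphisms $f$ and $h$ cannot be scalar multiples of one another, so they are linearly independent in $\mbox{Hom}_A(M,N)$, forcing $\dim_k(\mbox{Hom}_A(M,N))\ge 2$ and contradicting the hypothesis. Hence $\Re^2(M,N)=0$. I do not expect any step here to present a real obstacle; the content is purely the bookkeeping that a one-dimensional Hom-space admits no room for a proper radical power beyond the irreducible layer, and the only care needed is to quote the identity $\mbox{Irr}(M,N)=\Re(M,N)/\Re^2(M,N)$ correctly and to keep the linear-independence argument clean.
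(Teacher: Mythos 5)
Your proposal is correct and follows essentially the same route as the paper's own (inline) justification: $f$ is a non-isomorphism between indecomposables hence lies in $\Re(M,N)$, irreducibility gives a nonzero class in $\mbox{Irr}(M,N)=\Re(M,N)/\Re^2(M,N)$ so $f\notin\Re^2(M,N)$, one-dimensionality of $\mbox{Hom}_A(M,N)$ gives $g=\alpha f$, and the existence of a nonzero element of $\Re^2(M,N)$ would force $\dim_k(\mbox{Hom}_A(M,N))\geq 2$. The only difference is that you spell out the linear-independence step that the paper leaves as ``clearly,'' which is fine.
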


\begin{lema}\label{lematecnico}
Let $X$ and $Y$ be indecoposable $A$-modules  such that $\emph{dim}_k\emph{Hom}_A(X,Y)=1$ and there is an irreducible morphism $f:X\rightarrow Y$. Let $\varphi:Y\rightarrow Z$ be a non-zero path of irreducible morphisms between indecomposable modules such that $\varphi \in \Re^m(Y,Z)\backslash \Re^{m+1}(Y,Z)$. If $\varphi f \in \Re^{m+2}(X,Z)$ then there exists a morphism $\overline{\varphi}\in \Re^{m+1}(Y,Z)$. Moreover $\overline{\varphi}f\neq 0$. 
\end{lema}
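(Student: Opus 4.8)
The plan is to apply Theorem~\ref{explotan} to the path obtained by prepending $f$ to $\varphi$. First I would write $\varphi = h_m\cdots h_1$ as a composition of $m$ irreducible morphisms between indecomposables, so that
\[
X \xrightarrow{\ f\ } Y \xrightarrow{\ h_1\ } \cdots \xrightarrow{\ h_m\ } Z
\]
is a path of $n:=m+1$ irreducible morphisms between the indecomposable modules $X_1=X,\,X_2=Y,\dots,X_{n+1}=Z$. Since $A$ is representation-finite, $\Gamma_A$ is a standard component with trivial valuation, so Theorem~\ref{explotan} is available. I would then record the two consequences of the standing hypotheses that drive the argument: by Remark~\ref{tecnico}, $f\in\Re(X,Y)\setminus\Re^2(X,Y)$ and, crucially, $\Re^2(X,Y)=0$, while $\dim_k\mbox{Hom}_A(X,Y)=1$ forces every morphism $X\to Y$ to be a scalar multiple of $f$.

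Next, working with the hypothesis in its intended meaning that $\varphi f$ is a \emph{nonzero} morphism of $\Re^{m+2}(X,Z)=\Re^{n+1}$, the composite path above realizes condition (a) of Theorem~\ref{explotan} (nonzero composition lying one radical degree deeper than the naive $\Re^{n}$). I would invoke the theorem to obtain from condition (b) a path $X_1\xrightarrow{f_1}X_2\xrightarrow{f_2}\cdots\xrightarrow{f_n}X_{n+1}$ of irreducible morphisms with \emph{zero} composition $f_n\cdots f_1=0$, together with a nonzero morphism $\epsilon=\epsilon_n\cdots\epsilon_1$ in which each $\epsilon_i$ is either $f_i$ or lies in $\Re^2(X_i,X_{i+1})$. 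The decisive point is the first arrow: $f_1\colon X=X_1\to X_2=Y$ is irreducible, hence $f_1=\alpha f$ with $\alpha\in k\setminus\{0\}$ because $\dim_k\mbox{Hom}_A(X,Y)=1$; and since $\Re^2(X,Y)=0$ while $\epsilon\neq 0$ forces $\epsilon_1\neq 0$, the alternative $\epsilon_1\in\Re^2(X,Y)$ is excluded. Thus necessarily $\epsilon_1=f_1=\alpha f$.

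Finally I would set $\overline{\varphi}:=\epsilon_n\cdots\epsilon_2\colon Y\to Z$, so that $\epsilon=\overline{\varphi}\,\epsilon_1=\alpha\,\overline{\varphi}f$; as $\epsilon\neq 0$ this yields $\overline{\varphi}f\neq 0$, the second assertion. To place $\overline{\varphi}$ in $\Re^{m+1}(Y,Z)$, I note that each of the $n-1=m$ factors $\epsilon_2,\dots,\epsilon_n$ lies in $\Re$, giving a priori $\overline{\varphi}\in\Re^{m}(Y,Z)$, and that at least one of them lies in $\Re^2$: were all $\epsilon_i=f_i$ for $i\geq 2$, then together with $\epsilon_1=f_1$ we would get $\epsilon=f_n\cdots f_1=0$, contradicting $\epsilon\neq 0$. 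Counting one factor in $\Re^2$ and the remaining $m-1$ in $\Re$ then gives $\overline{\varphi}\in\Re^{2+(m-1)}(Y,Z)=\Re^{m+1}(Y,Z)$, as required.

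The hard part will be the radical-degree bookkeeping at the end, and in particular the forced identity $\epsilon_1=f_1$: this is exactly where the two hypotheses on $X,Y$ are used ($\dim_k\mbox{Hom}_A(X,Y)=1$ gives $f_1=\alpha f$, and $\Re^2(X,Y)=0$ rules out the $\Re^2$ alternative), and it guarantees that the $\Re^2$-jump produced by the zero composition occurs among the indices $i\geq 2$ rather than at $i=1$. I would also flag two points where care is needed: Theorem~\ref{explotan} is applied with $\varphi f\neq 0$, and the count in which the length $m$ equals the number of irreducible factors of $\varphi$ is the pertinent one here (as for the sectional paths arising in the applications); if $\varphi$ itself drops more than one radical degree per factor, the same scheme should be run using the refined statement of \cite{CLT}.
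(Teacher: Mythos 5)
Your proposal is correct and follows essentially the same route as the paper: prepend $f$ to the factorization of $\varphi$, apply Theorem~\ref{explotan} to the resulting path of $m+1$ irreducible morphisms, use $\dim_k\mathrm{Hom}_A(X,Y)=1$ together with Remark~\ref{tecnico} to force the first factor $\epsilon$ to be a nonzero scalar multiple of $f$, and then locate the $\Re^2$-jump among the remaining $m$ factors to get $\overline{\varphi}\in\Re^{m+1}(Y,Z)$ with $\overline{\varphi}f\neq 0$. Your explicit bookkeeping at the end, and your observation that the argument tacitly uses $\varphi f\neq 0$, only make explicit what the paper leaves implicit.
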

\begin{proof}
Let $\varphi:Y=X_1 \stackrel{h_1}\rightarrow X_2 \stackrel{h_2}\rightarrow X_3 \rightarrow \dots \rightarrow X_m \stackrel{h_m}\rightarrow X_{m+1}=Z$ be a path of irreducible morphisms such that $\varphi \in \Re^m(Y,Z)\backslash \Re^{m+1}(Y,Z)$. Assume that $\varphi f \in \Re^{m+2}(X,Z)$. Then by Theorem \ref{explotan}, there is a  path $X=X_0\stackrel{f_0}\rightarrow Y \stackrel{f_1}\rightarrow X_2 \stackrel{f_2}\rightarrow X_3 \rightarrow \dots \rightarrow X_m \stackrel{f_m}\rightarrow X_{m+1}=Z$ of irreducible morphisms with zero-composition and a morphism $\epsilon= \epsilon_{_{m}} ... \epsilon_{_0} \neq 0$ where $\epsilon_{_i} = f_{_i}$  or $\epsilon{_i} \in \Re^{2}(X_i, X_{i+1})$. Observe that at least for one $i$ we have that $\epsilon¨{_i} \in \Re^{2}(X_i, X_{i+1})$, otherwise $\epsilon=0$. 

By Remark \ref{tecnico} we know that $\epsilon_{_0}=cf$, with $c\in k-\{0\}$, because $\mbox{dim}_{k}\mbox{Hom}_{A}(X,Y)=1$ and since by hypothesis there is an irreducible morphism from $X$ to $Y$. Therefore, we have  a morphism $\overline{\varphi}=\epsilon_{_{m}} ... \epsilon_{_1} \in \Re^{m+1}(Y, Z)$ and clearly $\overline{\varphi}f\neq 0$. 
\end{proof}

 Now, we are in position to prove Theorem D, the main result of this section.

\begin{teo}\label{toupie}
Let $A \simeq kQ_A/I_A$ be the representation-finite toupie algebra given by the bound quiver  defined as in $(\ref{grafo})$.
%$$\xymatrix @!0 @R=1.0cm  @C=1.5cm {&x_1\ar[r]^{\alpha_2}&\ar@{.}[r]&\ar[r]^{\alpha_{n_1}}&x_{n_1} \ar[rd] ^{\alpha_{n_1+1}}& \\
%a\ar[r]^{\beta_1}\ar[ru]^{\alpha_1}\ar[rd]_{\gamma_1}&y_1\ar[r]^{\beta_2}&\ar@{.}[r]^{\beta_{n_2}}&
%\ar[r]&y_{n_2}\ar[r]^{\beta_{n_2+1}}&b\\
%&z_1\ar[r]_{\gamma_2}&\ar@{.}[r]&\ar[r]_{\gamma_{n_3}}&z_{n_3}\ar[ru]_{\gamma_{n_3+1}}&\\} $$
%\noindent with $I_A=<\alpha_{n_1+1}\dots\alpha_1-\beta_{n_2+1}\dots\beta_1, \gamma_{k_t}\dots \gamma_{k_2}>$.
%\noindent with $I_A=<\alpha_{n_1+1}\dots\alpha_1-\beta_{n_2+1}\dots\beta_1, \gamma_{_{t+j}}\dots \gamma_{_{j}}>$ for some $1\leq j, j+t\leq n_3$.
Then the nilpotency index of $\Re(\emph{mod}\,A)$ is determined by the length of the   non-zero path of irreducible morphisms from the projective $P_u$ to the injective $I_u$ going through the simple $S_u$, where $u$ is a vertex involved in the zero-relation.
Furthermore, all the vertices $x$ related to the zero-relation  determine paths from $P_x$ to $I_x$ going through $S_x$ of the same length. Therefore, it is enough to consider only one vertex in the zero-relation.
\end{teo}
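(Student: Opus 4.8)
The goal is to show that for the toupie algebra $A$ of the form \eqref{grafo}, the nilpotency index $r_A$ is governed by a single vertex involved in the zero-relation $\gamma_{t+j}\dots\gamma_j$, and that all vertices $z_i$ attached to this relation give paths $P_{z_i}\rightsquigarrow S_{z_i}\rightsquigarrow I_{z_i}$ of the same length. The strategy is to combine the reduction results already in hand — Theorem \ref{nilpopf} eliminates the source $a$ and the sink $b$, and Proposition \ref{lematoupie1} collapses each branch into a single invariant: $r_{x_i}$ is constant on the $\alpha$-branch, $r_{y_i}$ is constant on the $\beta$-branch, and on the $\gamma$-branch the $r_{z_i}$ are constant among vertices inside the zero-relation (part (c)) with the outside vertices dominated by them (parts (d),(e)). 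So the only remaining task is to show that the vertices $z_i\in(R_A)_0$ produce the maximum, i.e.\ that $r_{x_i}$ and $r_{y_i}$ do not exceed $r_{z}$ for $z\in(R_A)_0$, and that among the $z_i\in(R_A)_0$ the value is genuinely constant.

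\textbf{First steps.} First I would invoke Theorem \ref{nilpopf} to restrict attention to vertices that are neither the source $a$ nor the sink $b$, so $r_A=\max\{r_c+1\}$ over the interior vertices. Then, using Proposition \ref{lematoupie1}(a)–(e), I would reduce the computation to comparing three numbers: the common value on the $\alpha$-branch, the common value on the $\beta$-branch, and the maximal value $r_{z}$ attained for $z\in(R_A)_0$ on the $\gamma$-branch. The commutative relation $\alpha_{n_1+1}\dots\alpha_1-\beta_{n_2+1}\dots\beta_1$ forces the $\alpha$- and $\beta$-branches to behave symmetrically, so a further application of Corollary \ref{irred} (or Theorem \ref{teoirred}) comparing $P_{x_i}\to P_{y_j}$–type maps, when they exist, should yield that the $\alpha$- and $\beta$-branch values agree or are both dominated by the $\gamma$-branch value. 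The heart of the argument is the explicit indecomposable module $M_{z_i}$ and its non-zero cycle $\rho_{z_i}$ constructed in Lemma \ref{three}: this cycle has length $n=2\cdot(\#\text{arrows in the }\gamma\text{-branch})$, factors through $S_{z_i}$, and by Remark \ref{obsmono} composes non-trivially with the monomorphism $P_{z_i}\to M_{z_i}$ and epimorphism $M_{z_i}\to I_{z_i}$.

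\textbf{The main estimate.} The crucial step is to turn $\rho_{z_i}$ into a lower bound for $r_{z_i}$. Using the monomorphism $\phi_{z_i}:P_{z_i}\to M_{z_i}$ and epimorphism $\psi_{z_i}:M_{z_i}\to I_{z_i}$ of Remark \ref{obsmono}, the composite $\psi_{z_i}\rho_{z_i}\phi_{z_i}:P_{z_i}\to I_{z_i}$ is a non-zero morphism that factors through $S_{z_i}$, so by Lemma \ref{proj-inj}(a) it lies in $\Re^{r_{z_i}}\setminus\Re^{r_{z_i}+1}$. Since $\rho_{z_i}\in\Re^n$ and the two flanking maps contribute at least one power each, this shows $r_{z_i}$ is at least of the order of $n$, and in particular strictly larger than what a branch governed purely by a commutative relation (where parallel paths have equal length and no such long cycle through the simple exists) can produce. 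To get the reverse inequality and the equality $r_{z_i}=r_{z_h}$ for all $z_i,z_h\in(R_A)_0$, I would combine Proposition \ref{lematoupie1}(c) with Lemma \ref{lematecnico}: the latter lets me control, whenever a composite $\varphi f$ drops two degrees in the radical filtration, the existence of a corrected path $\overline{\varphi}$ of one degree higher with $\overline{\varphi}f\neq0$, which is exactly the bookkeeping needed to transport the length of a maximal path across an irreducible map $P_{z_{i+1}}\to P_{z_i}$ (or $I_{z_{i+1}}\to I_{z_i}$) without loss.

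\textbf{The expected obstacle.} The main difficulty will be the final comparison showing that the $\gamma$-branch zero-relation value dominates the $\alpha$- and $\beta$-branch values rather than merely matching them, since the Auslander–Reiten quiver is a component \emph{without} length and so one cannot simply read off path lengths from parallel-path equalities. Concretely, I expect the delicate point to be verifying that no morphism supported on the commutative branches factors through $S_u$ for $u\in(R_A)_0$ in a way that would inflate $r_{x_i}$ or $r_{y_i}$ beyond $r_{z}$; this is where Lemma \ref{lematecnico} together with the dimension count $\dim_k\mathrm{End}_A(P_i)=\dim_k\mathrm{End}_A(I_i)=1$ (valid for every vertex of a representation-finite toupie algebra) must be used carefully to rule out unwanted factorizations. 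Once that factorization analysis is settled, Theorem \ref{vertex-nilpo}–style reasoning closes the argument and yields that a single $z\in(R_A)_0$ suffices.
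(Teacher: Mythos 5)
Your overall architecture matches the paper's: reduce via Theorem \ref{nilpopf} and Proposition \ref{lematoupie1} to the three numbers $r_{x_{n_1}}$, $r_{y_{n_2}}$, $r_{z_i}$ (for $z_i$ in the zero-relation), and use the cycle $\rho_{z_i}$ on the module $M_{z_i}$ from Lemma \ref{three}, flanked by the mono $\phi_{z_i}$ and epi $\psi_{z_i}$ of Remark \ref{obsmono}, to produce a non-zero map $P_{z_i}\to I_{z_i}$ factoring through $S_{z_i}$ and hence (by Lemma \ref{proj-inj}) a lower bound on $r_{z_i}$ of roughly $2(n_3+1)$ plus the degrees of the flanking maps. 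That half of the argument is sound and is exactly what the paper does.

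The genuine gap is on the other side of the inequality: you never obtain an upper bound on $r_{y_{n_2}}$ (or $r_{x_{n_1}}$) in terms of quantities comparable with the lower bound on $r_{z_i}$. Your claim that the cycle makes $r_{z_i}$ ``strictly larger than what a branch governed purely by a commutative relation can produce'' is asserted, not proved, and your proposed fix via Corollary \ref{irred} applied to maps $P_{x_i}\to P_{y_j}$ does not get off the ground, since projectives on distinct branches of a toupie quiver are not linked by irreducible morphisms except through $P_a$ and $P_b$. The paper's key move, which is absent from your plan, is to compute $r_{y_{n_2}}$ \emph{exactly} by routing the maximal path $P_{y_{n_2}}\rightsquigarrow S_{y_{n_2}}\rightsquigarrow I_{y_{n_2}}$ through the same module $M_{z_i}$: one writes the natural inclusion $P_{z_i}\to M_{z_i}$ as a sum of paths, isolates a summand $\phi_l\in\Re^{s}\setminus\Re^{s+1}$ with $\rho_{z_i}\phi_l\neq0$, precomposes with the chain of irreducible monomorphisms $P_b\to P_{z_{n_3}}\to\cdots\to P_{z_i}$, and uses Lemma \ref{lematecnico} together with $\dim_k\mathrm{Hom}_A(P_b,M_{z_i})=1$ and the vanishing $\mathrm{Hom}_A(P_b,S_{z_i})=0$ to pin the degree of the composite at exactly $s+i$; comparing with the maximal path $\phi':P_{y_{n_2}}\to M_{z_i}$ then forces $\phi'\in\Re^{s+i-1}\setminus\Re^{s+i}$, and dually for $\psi':M_{z_i}\to I_{y_{n_2}}$. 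This yields $r_{y_{n_2}}=m+s+n_3-n_2$, which can then be compared with $r_{z_i}\geq s+m+2(n_3+1)$. Without this explicit degree bookkeeping the two sides of the comparison live on different scales and the conclusion does not follow; you correctly identified where the difficulty lies but left it unresolved.
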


\begin{proof} It follows from Proposition \ref{lematoupie1} that it is enough to study $r_{x_{n_1}}$,   $r_{y_{n_2}}$ and $r_{z_{{i}}}$ for some $j\leq i \leq j+t-1$. We only prove that $r_{y_{n_2}} \leq r_{z_{{i}}}$, since the fact that $r_{x_{n_1}} \leq r_{z_{{i}}}$ follows with similar arguments.

Consider the $A$-module $M_{z_{i}}$ and the cycle $\rho_{z_i}\in \mbox{End}_k(M_{z_{i}})$  defined in Lemma \ref{three}. Note that $\mbox{dim}_{k}(\mbox{Hom}_{A}(P_b, M_{z_{i}}))=1$ and also  that $\mbox{dim}_{k}(\mbox{Hom}_{A}( M_{z_{i}},I_a))=1$.  

It is clear  from the shape of the quiver $Q_A$ that there are irreducible morphisms from $P_{z_{c+1}}$ to $P_{z_{c}}$ for $i\leq c \leq n_3-1$, and from $I_{z_{r+1}}$ to $I_{z_{r}}$ for $1 \leq r\leq i-1$. 

Consider  $\phi_{z_i}:P_{z_{i}}\rightarrow M_{z_{i}}$ the natural inclusion. By Remark \ref{obsmono}, we have that $\rho_{z_i}\phi_{z_i}\neq 0$. 
If $\phi_{z_i}=\sum\limits_{l=1}^L \phi_l$, where each $\phi_l$ is a  path of irreducible morphisms between indecomposable $A$-modules then there exists $l$ such that $\rho_{z_i}\phi_l\neq 0$. Assume that $\phi_l\in \Re^{s}(P_{z_{i}}, M_{z_{i}}) \backslash \Re^{s+1}(P_{z_{i}},M_{z_{i}})$. Since $\rho_{z_i}$ is a non-zero morphism, $\rho_{z_i}\phi_l\in \Re^{s+1}(P_{z_{i}},M_{z_{i}})$ and thus $\phi_l, \rho_{z_i}\phi_l$ are linear independent morphisms as $k$-vector spaces.  We know that $\mbox{dim}_k\mbox{Hom}_A(P_{z_{i}},M_{z_{i}})=2$ then  $\{\phi_l, \rho_{z_i}\phi_l\}$ is a basis for $\mbox{Hom}_A(P_{z_{i}},M_{z_{i}})$. In particular, since $\phi_{z_i}\in \mbox{Hom}_A(P_{z_{i}},M_{z_{i}})$, then there exist $a_1, a_2\in k$ such that $\phi_{z_i}=a_1 \phi_l+a_2\rho_{z_i}\phi_l$.

%Since $\mbox{dim}_k\mbox{Hom}_A(P_{z_{i}},M_{z_{i}})=2$ we have that $\{\phi_l, \rho_{z_i}\phi_l\}$ is a basis for $\mbox{Hom}_A(P_{z_{i}},M_{z_{i}})$. In particular, since $\phi_{z_i}\in \mbox{Hom}_A(P_{z_{i}},M_{z_{i}})$, there exist $a_1, a_2\in k$ such that $\phi_{z_i}=a_1 \phi_l+a_2\rho_{z_i}\phi_l$. 

We claim that $\phi_l$ is a monomorphism. In fact, let $h$ be a morphism such that $\phi_lh=0$. Then $\phi_{z_i} h=a_1 \phi_lh+a_2 \rho\phi_lh=0$. Since $\phi_{z_i}$ is a monomorphism we get that $h=0$ and thus, $\phi_l$ is a monomorphism. 

Let $f$ be the composition of the $i$ irreducible monomorphism between projective $A$-modules from $P_{z_{n_3}}$ to $P_{z_{i}}$. Then $\phi_{l}f\neq 0$ since they are monomorphisms. 

We claim that $\phi_{l}f\in \Re^{s+i}(P_b, M_{z_{i}}) \backslash \Re^{s+i+1}(P_b,M_{z_{i}})$. 
Indeed, otherwise if $\phi_{l}f\in \Re^{s+i+1}(P_b, M_{z_{i}})$  applying Lemma \ref{lematecnico} $i$ times, we get that there exists $\overline{\phi_l}\in \Re^{s+1}(P_{z_{i}}, M_{z_{i}})$ such that  $\overline{\phi_l}f\neq 0$. 
Now, since $\overline{\phi_l}\in \mbox{Hom}_A(P_{z_{i}}, M_{z_{i}})$ then $\overline{\phi_l}=b_1\phi_l+b_2\rho_{z_i}\phi_l$, with $b_1,b_2\in k$. Moreover, since $\overline{\phi_l}\in \Re^{s+1}(P_{z_{i}}, M_{z_{i}})$, then  $b_1$ is zero. Hence $\overline{\phi_l}f=b_2\rho_{z_i}\phi_lf=0$ because $\rho_{z_i}$ factors through $S_{z_{i}}$ and $\mbox{Hom}_A(P_b,S_{z_{i}})=0$. Thus we get a contradiction. Therefore, $\phi_lf\in \Re^{s+i}(P_b, M_{z_{k_i}}) \backslash \Re^{s+i+1}(P_b,M_{z_{k_i}})$. 

On the other hand, consider $g: P_b\rightarrow P_{y_{n_2}}$ an irreducible morphism, and $\varphi:P_{y_{n_2}}\rightarrow M_{z_{i}}$ the natural inclusion. Since $\mbox{dim}_k\mbox{Hom}_A(P_{y_{n_2}}, M_{z_{i}})=1$ there is a path of irreducible morphism between indecomposable $A$-modules of maximal length which is a monomorphism. Let  denote it by $\phi':P_{y_{n_2}}\rightarrow M_{z_{k_i}}$ with $\phi'\in \Re^{n}(P_{y_{n_2}}, M_{z_{k_i}}) \backslash \Re^{n+1}(P_{y_{n_2}}, M_{z_{k_i}})$.

We claim that $n=s+i-1$. Indeed,  since $\mbox{dim}_k\mbox{Hom}_A(P_{b}, M_{z_{i}})=1$ we have that $\phi' g = d_1 \phi_lf\in \Re^{s+i}(P_b, M_{z_{k_i}}) \backslash \Re^{s+i+1}(P_b,M_{z_{k_i}})$, with $d_1\in k$. Then if $n< s+i-1$, applying Lemma \ref{lematecnico}, we have that there exists $\overline{\phi'}\in \Re^{n+1}(P_{y_{n_2}}, M_{z_{k_i}}) $.  Since $\mbox{dim}_k\mbox{Hom}_A(P_{y_{n_2}}, M_{z_{i}})=1$, then $\overline{\phi'} = d_2 \phi'$, with $d_2\in k$, which is a contradiction because $\phi'\in \Re^{n}(P_{y_{n_2}}, M_{z_{k_i}}) \backslash \Re^{n+1}(P_{y_{n_2}}, M_{z_{k_i}})$. Hence $n=s+i-1$ and $\phi'\in \Re^{s+i-1}(P_{y_{n_2}}, M_{z_{i}}) \backslash \Re^{s+i}(P_{y_{n_2}},M_{z_{i}})$.  

Summarizing, we have the following diagram:
$$\xymatrix @!0 @R=1.0cm  @C=1.3cm {&  P_{z_{n_3}} \ar[r]&\ar@{.}[r]&\ar[r]&
P_{{z_{i}}}\ar[rrd]^{\phi_l}& & \\
P_b\ar[ru]\ar[rrd]&&& & & &M_{{z_{i}}}\\
& &P_{y_{n_2}}\ar[rrrru]_{\phi'}&&& \\} $$
with $\phi_l\in \Re^{s}(P_{z_{i}}, M_{z_{i}}) \backslash \Re^{s+1}(P_{z_{i}},M_{z_{i}})$ and $\phi'\in \Re^{s+i-1}(P_{y_{n_2}}, M_{z_{i}}) \backslash \Re^{s+i}(P_{y_{n_2}},M_{z_{i}})$.

Similarly,  consider  $\psi=\sum\limits_{r=1}^R \psi_r$ from  $M_{z_{i}}$ to $I_{z_{i}}$, the natural projection, where $\psi_r$ is a  path of irreducible morphisms between indecomposable $A$-modules. 

With dual arguments as before we have that there exists $\psi_r:M_{z_{i}}\rightarrow I_{z_{i}}$ such that $\psi_r\rho_{z_i}\neq 0$, $\psi_r\in \Re^{m}( M_{z_{i}}, I_{z_{i}}) \backslash \Re^{m+1}(M_{z_{i}}, I_{z_{i}})$ is an epimorphism. Moreover, if $\overline{f}$ is the composition of $n_3-i+1$ irreducible epimorphisms from $I_{z_{i}}$ to $I_a$ then $\overline{f}\psi_r\in \Re^{m+n_3-i+1}( M_{z_{i}}, I_{a}) \backslash \Re^{m+n_3-i}(M_{z_{i}}, I_{a})$. 

On the other hand, let $\psi'$ be a path of irreducible morphisms between indecomposable $A$-modules of maximal length from $M_{z_i}$ to $I_{y_{n_2}}$ and $\overline{g}$ the composition of the $n_2$  irreducible epimorphisms between $I_{y_{n_2}}$ to $I_a$. Since, $\mbox{dim}_k\mbox{Hom}_A(M_{z_i}, I_a)=1$ we have that $\overline{g}\psi'=c'\overline{f}\psi_r\in \Re^{m+n_3-i+1}( M_{z_{i}}, I_{a}) \backslash \Re^{m+n_3-i}(M_{z_{i}}, I_{a})$, with $c'\in k$. 

Again, with dual arguments as before we can prove that the morphism  $\psi'$ is such that it belongs to $\Re^{m+n_3-i+1-n_2}( M_{z_{i}}, I_{y_{n_2}}) \backslash \Re^{m+n_3-i-n_2}(M_{z_{i}}, I_{y_{n_2}})$.  

Therefore   we have also the following diagram:  
$$\xymatrix @!0 @R=1.0cm  @C=1.3cm {& &I_{{z_{k_i}}}\ar[r]&\ar@{.}[r]&
\ar[r]&I_{{z_{1}}}\ar[rd]& \\
M_{{z_{k_i}}}\ar[rru]^{\psi}\ar[rrd]_{\psi'}&&& & & &I_{a}\\
& &I_{y_{n_2}}\ar[r]&\ar@{.}[r]&\ar[r]&I_{y_{1}}\ar[ur]& \\} $$
with $\psi_r\in \Re^{m}( M_{z_{i}}, I_{z_{i}}) \backslash \Re^{m+1}(M_{z_{i}}, I_{z_{i}})$ and $\psi'\in \Re^{m+n_3-i+1-n_2}( M_{z_{i}}, I_{y_{n_2}}) \backslash \Re^{m+n_3-i-n_2}(M_{z_{i}}, I_{y_{n_2}})$.  

Since $\mbox{dim}_k\mbox{Hom}_A(P_{y_{n_2}}, M_{z_{i}})=1$ and $\mbox{dim}_k\mbox{Hom}_A(M_{z_{i}}, I_{y_{n_2}})=1$ we get that $\psi'\phi'$ is non-zero. Observe that $\psi'\phi'\in \Re^{m+s+n_3-n_2}(P_{y_{n_2}}, I_{y_{n_2}})\backslash \Re^{m+s+n_3-n_2+1}(P_{y_{n_2}}, I_{y_{n_2}})$ otherwise applying Theorem \ref{explotan} we get a contradicion to the fact that $\phi'$ and $\psi'$ are of maximal lenght. Moreover, since $\mbox{Hom}_A(P_{y_{n_2}},I_{y_{n_2}})\simeq k$, $\psi'\phi'$ factors through $S_{y_{n_2}}$. Then $r_{y_{n_2}}=m+s+n_3-n_2$.

Finally,  by construction, we get that  $\psi_r\rho_{z_i}\phi_l$ is a non-zero morphism from $P_{z_{i}}$ to $I_{z_i}$ that factors through $S_{z_{i}}$. Moreover,  $\psi_r\rho_{z_i}\phi_l \in \Re^{s+m+2(n_3+1)}$. Therefore, we conclude that $r_{z_{i}}\geq s+m+2(n_3+1)\geq m+s+n_3-n_2=r_{y_{n_2}}$, proving the result.  
\end{proof}

We end this section showing an example that when we have more than one zero-relation in a branch of a representation-finite toupie algebra  then Theorem D does not hold. %In such a case, the nilpotency index will be determined by a vertex depending on the shape of the branches of the algebra. 

\begin{ej}\label{ej1}
\emph{Consider the toupie algebra $A=kQ_A/I_A$ given by the quiver $Q_A$}
\begin{displaymath}
\xymatrix {  & 1 \ar[rdd]^{\gamma_1}  \ar[ld]^{\alpha_1} \ar[dd]^{\beta_1} & \\ 
2 \ar[d]^{\alpha_2}  &  & \\
3 \ar[rd]^{\alpha_3}  & 5 \ar[d]^{\beta_2}  & 6 \ar[ld]^{\gamma_2}\\
  & 4 & }
\end{displaymath}
\noindent \emph{with $I_A=<\beta_2\beta_1-\gamma_2\gamma_1, \alpha_2\alpha_1, \beta_2\beta_1>$.}

\emph{The Auslander-Reiten quiver of  $\mbox{mod} \,A$ is as follows:}
\vspace{.1in}
{
\begin{displaymath}
\def\objectstyle{\scriptstyle}
\def\labelstyle{\scriptstyle}
   \xymatrix @!0 @R=1.1cm  @C=1cm{& & & & & & &  &P_2=I_3\ar[rd] & &    \\
  & & &S_2 \ar[rd]\ar@{.}[rr] & & { \tau^{-1} S_2 } \ar[rd]\ar@{.}[rr] & & S_3\ar[ru] \ar@{.}[rr]& & S_2&    \\
    & & & & { \tau I_4 }\ar@{.}[rr] \ar[rd] \ar[ru] & & I_4\ar[rd] \ar[ru] & && &   \\
    &{\bf  P_3}\ar@{.}[rr] \ar[rd] &  & { \tau^{-1} P_3 } \ar[rd]\ar[ru]\ar@{.}[rr]
   & & { \tau^{-2} P_3 } \ar[rd] \ar[ru]\ar@{.}[rr]& &{ \tau^{-3} P_3 }\ar[rd]\ar@{.}[rr]& & I_2 \ar[rd] &  \\
 P_4 \ar[rd] \ar[ru] \ar[rdd] \ar@{.}[rr]& & { \tau^{-1} P_4 } \ar[rd] \ar[ru] \ar[rdd]\ar@{.}[rr] &&{ \tau^{-2} P_4 }  \ar[rdd]\ar[rd] \ar[ru]\ar@{.}[rr]& & { \tau^{-3} P_4 }   \ar[rd] \ar[ru] \ar@{.}[rr] \ar[rdd] &  &{ \tau^{-4} P_4 }  \ar[rdd]\ar[ru] \ar[rd]\ar@{.}[rr] & & I_1 \\
  &P_5 \ar[ru]\ar@{.}[rr] & & { \tau^{-1} P_5 }  \ar[ru] \ar@{.}[rr] & & S_5 \ar[ru]\ar@{.}[rr] & &{ \tau I_5 }\ar@{.}[rr] \ar[ru] & & I_5\ar[ru] & \\
  & P_6 \ar[ruu]\ar@{.}[rr]&    &  { \tau^{-1} P_6 }\ar@{.}[rr]  \ar[ruu] & &S_6 \ar@{.}[rr]\ar[ruu] & & { \tau I_6 }\ar@{.}[rr]  \ar[ruu]   & & I_6 \ar[ruu]& }
\end{displaymath}}
\vspace{.1in}

\noindent \emph{where we identify the two copies of the simple $S_2$. }

\emph{We observe that the path from the projective to the injective going through the simple corresponding to the vertices $5$ and $6$ are the ones that determine the nilpotency index of the radical of the toupie algebra. Moreover, we also observe that such vertices belong to the branches involved in a commutative relation.}
\end{ej}

\end{document}